\documentclass[twoside]{article}
\usepackage{graphicx,amssymb,mathrsfs,amsmath,enumerate}
\usepackage{amsthm}
\usepackage{charter}
\usepackage[dvipdfm]{hyperref}
\textwidth=160truemm
\textheight=215truemm
\headsep=4truemm
\topmargin= 0pt
\oddsidemargin=0pt
\evensidemargin=0pt
\parindent=16pt
\setcounter{page}{1}

\footskip=0pt\nofiles

\catcode`@=11
\long\def\@makefntext#1{\noindent #1}
\newskip\tabcentering \tabcentering=1000pt plus 1000pt minus 1000pt
\def\REF#1{\par\hangindent\parindent\indent\llap{#1\enspace}\ignorespaces} 
\def\MCH#1#2{\setbox0=\hbox{\raise#1\hbox{#2}}\smash{\box0}}

\def\@evenfoot{}\def\@oddfoot{}

\def\@evenhead{\hbox to\textwidth{\footnotesize\rm\thepage \hfill
{\it Cheng Zhiyun}}} 

\def\@oddhead{\hbox to \textwidth{\footnotesize{\it
When is region crossing change an unknotting operation?} \hfill\thepage}}




\floatsep=0pt
\textfloatsep=0pt
\intextsep=0pt
\catcode`@=12

\def\bc{\begin{center}}
\def\ec{\end{center}}
\def\no{\noindent}
\def\hang{\hangindent\parindent}
\def\textindent#1{\indent\llap{\qquad #1\ \ \enspace}\ignorespaces}
\def\ref{\par\hang\textindent}

\newtheorem{theorem}{Theorem}[section]

\newtheorem{corollary}[theorem]{Corollary}

\newtheorem{proposition}[theorem]{Proposition}

\begin{document}
\abovedisplayskip=6pt plus 1pt minus 1pt \belowdisplayskip=6pt
plus 1pt minus 1pt
\thispagestyle{empty} \vspace*{-1.0truecm} \noindent
\vskip 10mm

\bc{\large\bf When is region crossing change an unknotting operation?\\[2mm]
\footnotetext{\footnotesize The authors are supported by NSF 11171025 and Scientific Research Foundation of Beijng Normal University}} \ec

\vskip 5mm
\bc{\bf Cheng Zhiyun\\
{\small School of Mathematical Sciences, Beijing Normal University
\\Laboratory of Mathematics and Complex Systems, Ministry of
Education, Beijing 100875, China
\\(email: czy@mail.bnu.edu.cn)}}\ec

\vskip 1 mm

\noindent{\small {\small\bf Abstract} In this paper, we prove that region crossing change on a link diagram is an unknotting operation if and only if the link is proper. A description of the behavior of region crossing change on link diagrams is given. Furthermore we also discuss the relation between region crossing change and the Arf invariant of proper links.
\ \

\vspace{1mm}\baselineskip 12pt

\no{\small\bf Keywords} region crossing change; unknotting operation \ \

\no{\small\bf MR(2000) Subject Classification} 57M25\ \ {\rm }}

\vskip 1 mm


\vspace{1mm}\baselineskip 12pt



\section{Introduction}
In this paper. we consider some local transformations on link diagrams. In [4], H. Murakami defined $\sharp$-operation and showed that $\sharp$-operation is an unknotting operation. In [5], $\triangle$-unknotting operation was defined by H. Murakami and Y. Nakanishi. At a later time, Y. Nakanishi [6] proved that a $\triangle$-unknotting operation can be obtained from a finite sequence of 3-gon moves. Hence 3-gon move is also an unknotting operation. In [1], Haruko Aida generalized 3-gon moves to $n$-gon moves, which was also proved to be an unknotting operation, see the figure below.
\begin{center}
\includegraphics{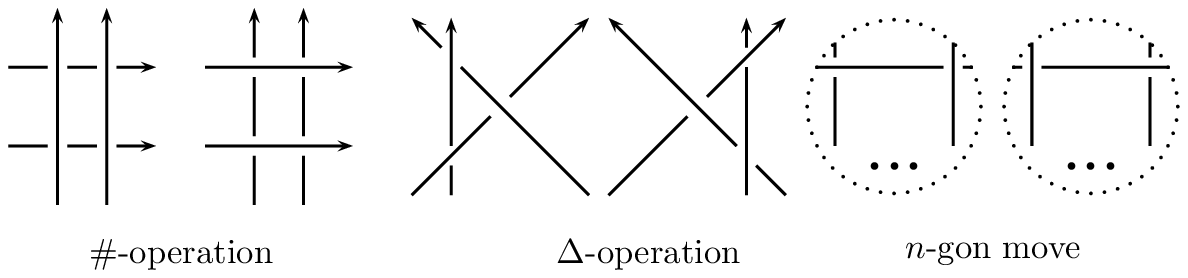} \centerline{\small Figure
1\quad}
\end{center}

Recently, a new local transformation on link diagram was introduced in [9], named as region crossing change. Here a \textit{region crossing change} at a region of $R^2$ divided by a link diagram is defined to be the crossing changes at all the crossing points on the boundary of the region. For example, the figure below shows the effect of taking region crossing change on the region with capital letter R:
\begin{center}
\includegraphics{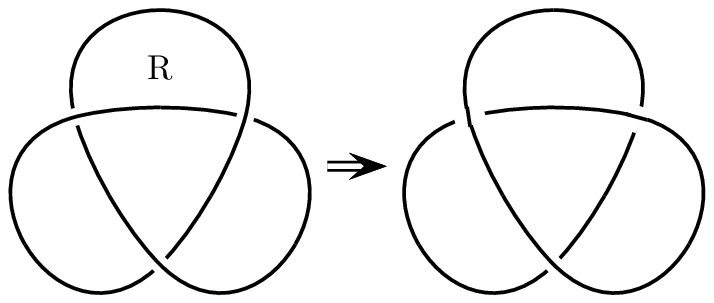} \centerline{\small Figure
2\quad}
\end{center}

Evidently, $\sharp$-operation and $n$-gon move mentioned above are both special cases of region crossing changes. Therefore we say region crossing change is an \textit{unknotting operation} on a link diagram if there exist some regions of $R^2$ divided by the link diagram such that if we apply region crossing changes on these regions the new diagram represents a trivial link. We remark that during the process Reidemeister moves are forbidden, i.e. the diagram are kept if we regard it as a 4-valent graph and ignore the information of the crossings. For the case of knots, the theorem below was proved in [9].
\begin{theorem}$^{[9]}$
Let $D$ be a knot diagram and $p$ a crossing point of $D$, then there exist some regions such that if one takes region crossing changes on these regions, $D$ will be transformed into a new knot diagram $D'$, here $D'$ is obtained from $D$ by a crossing change at $p$.
\end{theorem}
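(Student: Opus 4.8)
The plan is to recast region crossing change as a linear map over $\mathbb{F}_2$ and reduce the statement to a rank computation. Since a crossing change is an involution and crossing changes at distinct crossings commute, applying region crossing changes to a set $S$ of regions yields a diagram depending only on $S$, and its over/under data depends only on which crossings get switched an odd number of times. Writing $\mathcal{R}$ for the set of regions of $D$ in $\R^2$ and $\mathcal{C}$ for its set of crossings, this gives an $\mathbb{F}_2$-linear map $\phi\colon\mathbb{F}_2^{\mathcal{R}}\to\mathbb{F}_2^{\mathcal{C}}$ sending (the indicator of) a region $R$ to $\sum_{q\in\partial R}e_q$. The theorem says $e_p\in\operatorname{im}\phi$ for every crossing $p$; as the $e_p$ form a basis, this is equivalent to $\phi$ being surjective, which is what I would establish. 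Note that this framing automatically respects the prohibition on Reidemeister moves.

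First I would collect the numerology. Viewing $D$ as a connected $4$-valent plane graph with $|\mathcal{C}|$ vertices and $2|\mathcal{C}|$ edges, Euler's formula gives $|\mathcal{R}|=|\mathcal{C}|+2$; hence $\phi$ is surjective if and only if $\dim\ker\phi=2$, so it suffices to exhibit two independent kernel elements and to rule out any more. For the first part, assume $D$ is reduced, so that each crossing is flanked by four distinct regions (using that $D$ is reduced), two of each colour in the checkerboard colouring. Then switching every region switches each crossing four times, and switching every black region switches each crossing twice; thus the all-ones vector $\mathbf{1}$ and the indicator $\chi_B$ of the black regions both lie in $\ker\phi$, and they are independent as soon as $D$ has a crossing.

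The conceptual heart is showing there is nothing else in $\ker\phi$, and this is exactly where the hypothesis that $D$ is a single component enters — the place where, for links, one is forced to demand properness. Given $g\in\ker\phi$, consider its coboundary $\delta g$ on edges, $(\delta g)(e)=g(R)+g(R')$ with $R,R'$ the two regions along $e$. A one-line local computation at a crossing $q$ with four distinct flanking regions shows that the kernel condition ``the four flanking values of $g$ sum to $0$'' is equivalent to ``$\delta g$ is constant along each of the two strands passing straight through $q$''. Going straight through every crossing traverses the single immersed circle underlying $D$, so all edges lie on one strand and $\delta g$ is globally constant, say $\equiv c$. If $c=0$ then $g$ is constant on the connected region-adjacency graph, so $g\in\{\mathbf{0},\mathbf{1}\}$; if $c=1$ then $g$ is a proper $2$-colouring of that graph, hence one of the two checkerboard colourings, so $g\in\{\chi_B,\chi_W\}$. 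Thus $\ker\phi=\{\mathbf{0},\mathbf{1},\chi_B,\chi_W\}$ and $\phi$ is surjective, which gives the theorem.

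The main obstacle I anticipate is the non-reduced case: at a nugatory crossing only three distinct regions appear (one filling two opposite corners), so the count ``four distinct flanking regions'' fails, and both the list of obvious kernel elements and the coboundary dichotomy must be adjusted. I would absorb this either by directly realizing the single crossing change at a nugatory crossing (via the doubled region together with a compensating set) or by an induction that removes nugatory crossings one at a time; this is a bookkeeping elaboration of the linear-algebra argument rather than a new idea, but it is the part most likely to require genuine care.
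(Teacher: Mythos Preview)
Your proposal is correct for reduced diagrams and the strategy is sound, but note first that the paper does \emph{not} prove this theorem: it is quoted from Shimizu~[9], whose argument is constructive rather than rank--theoretic. There one splices the knot at $p$ into two arcs and reads off an explicit set of regions from a colouring keyed to one of the arcs. So your approach is genuinely different from the source the paper cites.

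On the other hand, your linear-algebra framing coincides exactly with the machinery the present paper sets up in Section~2: your map $\phi$ is the $\mathbb{Z}_2$--linear map with matrix $M(D)$, and ``$\phi$ is surjective for a knot diagram'' is precisely the case $n=1$ of Theorem~2.1 (rank $M(D)=c-n+1$), which the paper imports from~[2]. Your coboundary/strand--following computation of $\ker\phi$ is thus an independent proof of that special case; its merit is that it makes transparent where the single--component hypothesis enters (all edges lie on one strand, so $\delta g$ is globally constant), which is exactly the point at which the link case forces the properness condition the rest of the paper is about. What you lose relative to Shimizu's argument is constructiveness: you know the region set exists but do not exhibit it.

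One sharpening of your stated obstacle: in the non-reduced case it is not only the local four--corner count that fails. At a nugatory crossing the doubled region $R$ sits in two \emph{opposite} corners, so only three distinct regions are incident, and then $\phi(\mathbf{1})$ and $\phi(\chi_B)$ both take the value $1$ at that crossing; neither of your two ``obvious'' kernel vectors survives. So the repair really does require producing different kernel elements (or, as you suggest, an induction peeling off nugatory crossings, which is what~[2] does via its Proposition~2.1); it is not merely a local bookkeeping patch to the same global witnesses.
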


Obviously it follows that region crossing change is an unknotting operation on knot diagrams. In general, region crossing change is not always an unknotting operation for link diagrams. For instance, the standard diagram of Hopf link can not be transformed into a diagram of trivial link by region crossing changes, since the two crossing points are both on the boundary of each region of the diagram. Hence a natural question is on which kind of link diagrams, region crossing change is an unknotting operation. In [2], we give an answer to this question for 2-component links.
\begin{theorem}$^{[2]}$
Region crossing change is an unknotting operation on a diagram of $L=K_1\cup K_2$ if and only if $lk(K_1, K_2)$ is even.
\end{theorem}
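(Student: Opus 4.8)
\noindent The plan is to recast the question as linear algebra over $\mathbb{F}_2$. Let $C$ be the set of crossings of $D$ and, for a region $R$, let $v_R\in\mathbb{F}_2^{C}$ be the indicator vector of the crossings on $\partial R$. Since applying a region crossing change twice at the same region is trivial, the crossing-change patterns obtainable by region crossing changes on $D$ are exactly the vectors in $\mathcal{R}:=\mathrm{span}_{\mathbb{F}_2}\{v_R\}$. Write $X=K_1\cap K_2$ for the set of crossings between the two components. If $D'$ is obtained from $D$ by the crossing changes in a set $S$, then $lk$ changes by a sum of $\pm1$'s over $S\cap X$, so $lk_{D'}(K_1,K_2)\equiv lk_D(K_1,K_2)+|S\cap X|\pmod 2$. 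Hence the whole theorem follows once we show $\mathcal{R}=\mathcal{E}$, where $\mathcal{E}:=\{x\in\mathbb{F}_2^{C}:\textstyle\sum_{p\in X}x_p=0\}$: if region crossing changes trivialize $D$ then the pattern $S$ lies in $\mathcal{R}\subseteq\mathcal{E}$, so $|S\cap X|$ is even and $lk(K_1,K_2)$ must be even; conversely if $lk$ is even we will produce a suitable $S\in\mathcal{E}=\mathcal{R}$.

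\textbf{Necessity ($\mathcal{R}\subseteq\mathcal{E}$).} The heart here is a key lemma: \emph{every region $R$ of $D$ meets an even number of crossings of $X$}. I would prove it using the winding-parity (checkerboard) colouring of the complement of $K_2$ alone: colouring each region of the sub-diagram $K_2$ by the parity of the winding number of $K_2$ gives a proper $2$-colouring; every region of $D$ is monochromatic; each crossing of $X$ sits on a colour-separating $K_2$-edge, so $R$ can meet it in corners of only one colour, while $K_1$-edges preserve the colour. Walking once around $\partial R$ and tracking the colour then forces the number of $X$-crossings encountered to be even; the only care needed is for crossings met in two corners, which the colour constraint keeps on one side of the $K_2$-strand. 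Thus $v_R\in\mathcal{E}$ for all $R$, region crossing change preserves the parity of $lk(K_1,K_2)$, and necessity follows.

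\textbf{Sufficiency ($\mathcal{E}\subseteq\mathcal{R}$).} Since $\mathcal{E}$ is spanned by the unit vectors $e_p$ for self-crossings $p$ (of $K_1$ or $K_2$) together with $e_{p_0}+e_q$ for a fixed $p_0\in X$ and every other $q\in X$, it suffices to realize each of these. For a self-crossing $p$ of $K_1$ I would apply Theorem 1.1 to the sub-diagram $K_1$ to get a set $\mathcal{S}_1$ of its regions whose region crossing changes produce exactly the crossing change at $p$, then lift $\mathcal{S}_1$ to a set of regions of $D$ (each region of the $K_1$-diagram is a union of regions of $D$ plus the erased arcs of $K_2$), checking that re-inserting $K_2$ does not change the parity with which any self-crossing of $K_1$ is flipped while every crossing of $X$ gets flipped an even number of times. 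For $p,q\in X$ I would take an arc $\alpha$ of $K_1$ from $p$ to $q$ and build an explicit "band" of regions along one side of $\alpha$ whose combined region crossing change flips exactly $p$ and $q$, the telescoping mechanism behind the $n$-gon moves of Figure 1. Granting $\mathcal{R}=\mathcal{E}$: fix a descending diagram $D_0$ on the same underlying $4$-valent graph (order the components, make the first-traversed strand at each crossing the over-strand; then each component is unknotted and $K_1$ lies over $K_2$, so the two components separate and $D_0$ is a trivial-link diagram), let $S$ be the crossings where $D$ and $D_0$ differ; then $|S\cap X|\equiv lk(K_1,K_2)-0\equiv0\pmod2$, so $S\in\mathcal{E}=\mathcal{R}$ and the corresponding region crossing changes carry $D$ to $D_0$. (If $D$ is disconnected, argue on each connected piece, where $K_1$ and $K_2$ lie in different pieces, so $lk=0$ and Theorem 1.1 handles each knot diagram.)

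\textbf{Main obstacle.} The key lemma is elementary but fussy about crossings met in two corners; the genuine difficulty is the inclusion $\mathcal{E}\subseteq\mathcal{R}$ — showing $\mathcal{R}$ has codimension exactly one rather than more. Within that I expect the self-crossing case to be the delicate step: it forces a comparison between region crossing changes on $D$ and on the sub-diagram $K_1$, with careful control of how incidences behave when $K_2$ is erased and the regions of $D$ are amalgamated (and again when it is re-drawn). Once $\mathcal{R}=\mathcal{E}$ is established, the hypothesis that $lk(K_1,K_2)$ is even is used only in the final count $|S\cap X|\equiv lk(K_1,K_2)$, which is precisely what makes the passage to the descending diagram $D_0$ legitimate.
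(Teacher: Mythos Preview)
Your overall architecture is exactly the one underlying the paper: translate region crossing change into linear algebra over $\mathbb{F}_2$ (the paper packages this as the incidence matrix $M(D)$), prove the containment $\mathcal{R}\subseteq\mathcal{E}$ via the parity lemma, and then argue equality. Your necessity argument is correct and is precisely the content of the paper's Proposition~4.1 (``there are even crossing points on the boundary that are generated by $L-K_i$ and $K_i$''); your winding-parity phrasing is a fine way to justify what the paper simply asserts. Likewise, your treatment of a self-crossing $p$ of $K_i$ --- apply Theorem~1.1 to the sub-diagram and lift the chosen regions back to $D$ --- is exactly Proposition~2.2, and your parenthetical check that crossings of $X$ and self-crossings of the other component get hit an even number of times is the right one.

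The genuine gap is in your realization of $e_{p}+e_{q}$ for $p,q\in X$. A literal ``band of regions along one side of $\alpha$'' does telescope along $\alpha$, but those regions have boundary away from $\alpha$ as well, and every crossing on that outer boundary gets flipped once. So the pattern you produce is $e_p+e_q$ plus an uncontrolled contribution from the far side of the band; telescoping alone does not kill it. This is exactly the step you flagged as easier than the self-crossing case, but it is in fact where your argument is incomplete.

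The paper (via the companion paper~[2]) closes this in a different and cleaner way: rather than exhibit generators of $\mathcal{E}$ explicitly inside $\mathcal{R}$, it proves a rank formula (Theorem~2.1 here), $\operatorname{rank}_{\mathbb{F}_2} M(D)=c-n+1$, so for $n=2$ one has $\dim\mathcal{R}=c-1=\dim\mathcal{E}$, and together with your inclusion $\mathcal{R}\subseteq\mathcal{E}$ this forces $\mathcal{R}=\mathcal{E}$ with no construction needed. If you want to stay constructive, the fix the paper uses elsewhere (Proposition~3.1, Proposition~4.2) is not a band but a \emph{smoothing}: smooth $p$ according to the orientations so that $K_1,K_2$ merge into a single knot $K'$ on the same region structure, apply Theorem~1.1 to $K'$ to flip $q$ alone, and then observe that the same set of regions, read in $D$, flips either $\{q\}$ or $\{p,q\}$; combining this with the symmetric smoothing at $q$ and a small rank/dimension argument (as in the proof of Proposition~3.1) rules out the possibility that neither smoothing hits the spliced crossing. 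Either route repairs your sufficiency direction; the dimension count is the one the paper actually relies on.
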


In this paper, we will prove the following theorem, which can be regarded as a generalization of the theorem above.
\begin{theorem}
Region crossing change is an unknotting operation on a diagram of $L=K_1\cup K_2\cup \cdots\cup K_n$ if and only if
\begin{center}
$\sum\limits_{j\neq i} lk(K_i, K_j)=0$ $(mod$ $2)$
\end{center}
for all $1\leq i\leq n$.
\end{theorem}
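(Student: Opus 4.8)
The plan is to prove both directions inside a single $\mathbb{Z}_2$-linear framework: the forward implication reduces to a short parity observation, while the converse rests on a complete description of which crossing changes region crossing changes can realize. Fix a diagram $D$ of $L$, let $X$ be its set of crossings and $F$ its set of regions, and let $\phi\colon\mathbb{Z}_2^{F}\to\mathbb{Z}_2^{X}$ be the linear map sending a region $R$ to the set of crossings that are switched by a region crossing change at $R$; then $\operatorname{Im}\phi\subseteq\mathbb{Z}_2^{X}$ is exactly the collection of crossing-change patterns realizable by region crossing changes. For each component put $T_i\subseteq X$ equal to the set of crossings between $K_i$ and $L\setminus K_i$. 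Both directions of the theorem come out of understanding $\operatorname{Im}\phi$ in terms of the $T_i$.

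\textbf{Necessity.} I would first check that $T_i$ is orthogonal to $\operatorname{Im}\phi$ for every $i$. Travel once around the boundary of an arbitrary region $R$ and colour each traversed edge black or white according to whether or not it lies on $K_i$; the colour changes precisely at the corners of $R$ sitting at a crossing of $T_i$, because at a crossing of $K_i$ with some $K_j$ ($j\ne i$) the two edges bounding any quadrant lie on different components, whereas at every other crossing the two edges bounding a quadrant lie on the same side of the $K_i$/non-$K_i$ partition. Since $\partial R$ is a closed loop the number of colour changes is even, so a region crossing change at $R$ switches an even number of the crossings of $T_i$ and therefore preserves the parity of $\sum_{j\ne i}lk(K_i,K_j)$ (a switch at a $K_iK_j$ crossing changes $lk(K_i,K_j)$ by $\pm1$). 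A trivial link is proper, so if some sequence of region crossing changes turns $D$ into a diagram of a trivial link, then $L$ was already proper.

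\textbf{The main lemma.} The converse rests on the reverse inclusion, namely the precise identity
\[\operatorname{Im}\phi=\langle T_1,\dots,T_n\rangle^{\perp}=\{\,T\subseteq X:\ |T\cap T_i|\ \text{is even for every }i\,\}.\]
Dually this asserts that the orthogonal complement of $\operatorname{Im}\phi$ is spanned by $T_1,\dots,T_n$; one inclusion here is exactly Necessity, and the other — that every crossing set \emph{not} realizable by region crossing changes meets some $T_i$ in an odd number of crossings — is the substantive content. I would attack it by reconstructing, from a crossing set $T$ lying in the orthogonal complement of $\operatorname{Im}\phi$, a $2$-colouring of the edges of $D$ that is constant along each component and for which $T$ is exactly the set of crossings whose two strands receive different colours; the defining property of that orthogonal complement is what allows the local colour data to be integrated consistently around every region, and a colouring constant along components is precisely the datum of a $\mathbb{Z}_2$-combination of the $T_i$. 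For $n=1$ this just says $\phi$ is surjective, recovering the knot case (Theorem~1.1), and for $n=2$ it is consistent with Theorem~1.2. Equivalently, it would suffice to realize a generating set of $\langle T_1,\dots,T_n\rangle^{\perp}$: any single self-crossing change, and any family of inter-crossings tracing out a cycle in the \emph{component multigraph} (one vertex per $K_i$, one edge per inter-crossing). I expect this lemma to be the main obstacle.

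\textbf{Sufficiency.} Granting the lemma, let $L$ be proper and $D$ any diagram of it. Choose an ordering of the components and a base point on each, and let $T_0\subseteq X$ be the set of crossings whose over/under information must be reversed to turn $D$ into the descending diagram for these choices; a descending diagram always represents a trivial link. The self-crossings in $T_0$ lie in no $T_i$, and a short count shows that $|T_0\cap T_i|$ has the same parity as the number of crossings at which $K_i$ over-passes some other component, which in turn has the same parity as $\sum_{j\ne i}lk(K_i,K_j)$ — here one uses that the total number of crossings between any two components is even, and that $lk(K_i,K_j)$ equals the sum of the signs of the crossings where $K_i$ passes over $K_j$. Since $L$ is proper, $|T_0\cap T_i|$ is even for every $i$, so $T_0\in\langle T_1,\dots,T_n\rangle^{\perp}=\operatorname{Im}\phi$ by the lemma. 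Performing the region crossing changes that realize $T_0$ carries $D$ to a descending diagram, i.e.\ to a diagram of a trivial link; hence region crossing change is an unknotting operation on every diagram of a proper link, and the theorem follows.
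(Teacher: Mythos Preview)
Your plan is sound and takes a genuinely different route from the paper's. The paper argues constructively: it first proves (Proposition~4.2), by induction on the cycle length together with smoothings and a rank contradiction drawn from Theorem~2.1, that any ``cycle'' of inter-component crossings $p_1\in K_1\cap K_2,\dots,p_n\in K_n\cap K_1$ is region crossing change admissible; then, for a proper link, it builds the graph on components whose edges record odd pairwise linking numbers, observes that properness makes every vertex of even degree, and removes the edges one cycle at a time via Proposition~4.2 (together with Proposition~2.3 for self-crossings and pairs). Your approach is instead linear-algebraic: the Main Lemma $\operatorname{Im}\phi=\langle T_1,\dots,T_n\rangle^{\perp}$ characterises all admissible crossing sets in one stroke, after which sufficiency falls out of your descending-diagram parity count. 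The colouring argument you sketch for the hard inclusion does go through: regard the arc/corner incidence as the medial graph of $D$; its face boundaries are the region boundaries (where your orthogonality hypothesis gives an even number of label changes) and the $4$-cycles around each crossing (where the four equal labels sum to zero automatically), so the local data integrates to a global $2$-colouring of arcs, and the equality of the four corner labels at a crossing forces that colouring to be constant along strands. Your route is more conceptual and in fact recovers the rank formula $\operatorname{rank}M(D)=c-n+1$ as a corollary rather than using it as an input; the paper's route is more hands-on and produces, along the way, an explicit inductive recipe for regions realising any given admissible set.
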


We say a link is \textit{proper} if it satisfies the condition in Theorem 1.3. In [5], H. Murakami and Y. Nakanishi proved that $L=K_1\cup K_2\cup \cdots\cup K_n$ can be obtained from $L'=K'_1\cup K'_2\cup \cdots\cup K'_n$ by a finite sequence of $\sharp$-operations (some Reidemeister moves may be needed) if and only if
\begin{center}
$\sum\limits_{j\neq i} lk(K_i, K_j)=\sum\limits_{j\neq i} lk(K'_i, K'_j)$ $(mod$ $2)$
\end{center}
for all $1\leq i\leq n$. Similarly, in [1], the author proved that $L=K_1\cup K_2\cup \cdots\cup K_n$ can be deformed into $L'=K'_1\cup K'_2\cup \cdots\cup K'_n$ by a finite sequence of $n$-gon moves (some Reidemeister moves may be needed) if and only if \begin{center}
$\sum\limits_{j\neq i} lk(K_i, K_j)=\sum\limits_{j\neq i} lk(K'_i, K'_j)$ $(mod$ $2)$
\end{center}
for all $1\leq i\leq n$. Since when we talk about the equivalence generated by region crossing changes, Reidemeister moves are forbidden, hence Theorem 1.3 can not be obtained from the two results above evidently. For the same reason, we can only discuss the necessary condition of the equivalence up to region crossing changes, see Proposition 4.1. The sufficient condition does not make sense unless we are given two link diagrams which are isotopic as 4-valent graphs.

The rest of the paper are arranged as follows: in Section 2 we will take a brief review of the incidence matrix defined in [2] and some related results of it. In Section 3 we will prove Theorem 1.3 for 3-component links, which is the initial step of the induction used in Section 4. In Section 4, we will give the proof of Theorem 1.3 and offer a complete solution to detect whether some given crossing points of a link diagram can be switched by region crossing changes. Hence the behavior of region crossing change on link diagrams are well understood. Finally the relation between region crossing change and the Arf invariant is discussed.

\section{Incidence matrix of a link diagram}
In this section we will take a quick review of incidence matrix which was defined in [2]. Given a link diagram $D$, let $G$ and $G'$ be the Tait graph of $D$ and the dual graph respectively. In graph theory [10], the incidence matrix of a graph is defined as below
\begin{center}
$M(G)=(m_x(y)),\quad x\in V(G)$ and $y\in E(G)$
\end{center}
and
\begin{center}
$m_x(y)=
\begin{cases}
1& \text{if $y$ is incident with $x$;}\\
0& \text{otherwise.}
\end{cases}$
\end{center}
If we use $M(G)$ $(M(G'))$ to denote the incidence matrix of $G$ $(G')$, since $G$ and $G'$ have the same size, we can obtain a new $(c+2)\times c$ matrix from $M(G)$ and $M(G')$ as below
\begin{center}
$M(D)=\begin{bmatrix}
 M(G)   \\
 M(G')  \\
\end{bmatrix},$
\end{center}
here $c$ denotes the number of crossing points of $D$. If we work with $\mathbb{Z}_2$ coefficients, it is not difficult to find that the incidence matrix $M(D)$ is closely related to region crossing changes. In fact each row of $M(D)$ corresponds to a region of $D$, and the positions of 1's of one row tell us which crossings will be changed if we take region crossing change on the corresponding region. Moreover, given a set of regions, in order to understand the effect of region crossing changes on these regions, one just need to read the positions of 1's on the sum of the corresponding rows. The following theorem was proved in [2], here the rank means the $\mathbb{Z}_2$-rank.
\begin{theorem}$^{[2]}$
Let $L$ denote a $n$-component link, and $D$ a diagram of $L$, then the rank of $M(D)$ equals to $c-n+1$, here $c$ denotes the crossing number of $D$.
\end{theorem}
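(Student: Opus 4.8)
The plan is to read $\operatorname{rank}M(D)$ off the dimension of the left kernel of $M(D)$ and then to identify that kernel geometrically. A $\mathbb{Z}_2$-combination $\sum_{R\in S}(\text{row }R)$ of rows of $M(D)$ vanishes precisely when performing region crossing changes simultaneously on the family $S$ of regions changes no crossing; call such an $S$ \emph{inert}, and let $N_\ell$ be the $\mathbb{Z}_2$-space of inert families, so that $\operatorname{rank}M(D)=(c+2)-\dim N_\ell$ since $D$ (which we take connected) has $c+2$ regions. Dually, the right kernel $N_r$ of $M(D)$ is the space of crossing sets meeting the boundary of every region an even number of times, and since $M(D)$ has two more rows than columns, $\dim N_r=\dim N_\ell-2$; hence it suffices to compute either one, and Theorem 2.1 is equivalent to $\dim N_\ell=n+1$, or to $\dim N_r=n-1$.

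For the inequality $\operatorname{rank}M(D)\le c-n+1$ I would exhibit a large subspace of $N_r$. For a subset $T$ of the $n$ components, let $\chi_T$ be the set of crossings at which exactly one of the two strands lies on $\bigcup_{i\in T}K_i$. Traversing the boundary of any region once, the crossings at which $\chi_T=1$ are exactly the sign changes of the cyclic $0/1$-word recording, arc by arc, whether the current arc belongs to the $T$-sublink; a cyclic word has an even number of sign changes, so $\chi_T\in N_r$. (Self-crossings of a single component contribute $0$ everywhere, so a region touching a crossing at two corners causes no trouble here.) The map $T\mapsto\chi_T$ is linear with $\chi_{\{1,\dots,n\}}=0$, and because $D$ is connected the graph on $\{K_1,\dots,K_n\}$ whose edges are the inter-component crossings is connected, which forces $\sum_i\chi_{\{i\}}=0$ to be the only linear relation; thus $\dim N_r\ge n-1$. (Dually, for a \emph{reduced} $D$ --- one with no crossing incident to a region twice --- one sees the matching families in $N_\ell$ directly: the sets $S_i$ of regions around which $K_i$ has odd winding number, plus the set $S_{\mathrm{all}}$ of all regions, are inert and independent, since a relation among them read at the unbounded region kills the $S_{\mathrm{all}}$ coefficient, while $\sum_{i\in T}S_i=\emptyset$ would force $\bigcup_{i\in T}K_i$ to have even winding number around every region and hence be empty.)

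The opposite inequality $\dim N_\ell\le n+1$ is the real content. Here the clean mechanism is the boundary map $S\mapsto\partial S$, where $\partial S$ is the set of arcs of $D$ with exactly one side in $S$: at a crossing with four distinct incident regions, inertness of $S$ at that crossing is equivalent, by a short case analysis, to $\partial S$ either avoiding the crossing or running straight through it along one or both strands. Thus $\partial S$ is a disjoint union of straight-ahead closed walks of $D$ (closed walks that pass onto the opposite arc at every crossing they meet); but the straight-ahead walks of $D$ are exactly the underlying immersed curves of $K_1,\dots,K_n$, so $\partial$ sends $N_\ell$ into the $n$-dimensional span of those $n$ curves. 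Since $D$ is connected the kernel of $\partial$ is just $\{\emptyset,S_{\mathrm{all}}\}$, whence $\dim N_\ell\le 1+n$. (Equivalently, a $v\in N_r$ yields a $2$-colouring of the components --- declare $K_i,K_j$ oppositely coloured when they cross at a crossing of $v$ --- whose consistency forces $v=\chi_T$ for a colour class $T$, giving $\dim N_r\le n-1$.)

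I expect the main obstacle to be twofold. First, the identification of the straight-ahead decomposition of $\partial S$ with the link components, and the recovery of $S$ from $\partial S$ (equivalently, the consistency of the $2$-colouring), must use the connectedness of $D$ with some care. Second --- and this is the genuine technical nuisance --- \emph{nugatory} crossings, at which some region occupies two opposite corners (loops of the Tait graph or of its planar dual), wreck the local analysis above: there $S_i$ may fail to be inert, and ``$S$ inert at $y$'' is no longer equivalent to ``$\partial S$ straight at $y$''. I would neutralize these with a preliminary lemma: no $v\in N_r$ contains a nugatory crossing. Indeed such a crossing is a self-crossing of a single component and separates $D$ into two ``blobs''; summing the region-balance equations of $v$ over the regions interior to one blob rewrites the $v$-coordinate there as the sum of its $v$-coordinates at the other nugatory crossings inside that blob, and an induction on blob size forces all of them to vanish. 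Once every element of $N_r$ avoids nugatory crossings, one passes to the subdiagram $D'$ obtained by smoothing those crossings: one checks $N_r(D)\cong N_r(D')$, while $D'$ is reduced and has the same components, so the reduced case above gives $\dim N_r(D')=n-1$. Together with the first inequality this yields $\operatorname{rank}M(D)=c-n+1$.
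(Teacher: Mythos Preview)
The paper does not actually prove Theorem~2.1: it is quoted verbatim from reference~[2] (Cheng--Gao, \emph{On region crossing change and incidence matrix}) and used throughout as an imported black box. There is therefore no ``paper's own proof'' to compare your proposal against.

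For what it is worth, your argument is essentially correct and pleasantly conceptual. The identification of $N_r$ with the span of the cut vectors $\chi_T$ gives the lower bound cleanly, and the dual picture --- sending an inert region set $S$ to its boundary $\partial S$, which must decompose into straight-ahead closed walks and hence into full link components --- gives the matching upper bound. Two small remarks on the nugatory step. First, for the final inequality you only need the \emph{injection} $N_r(D)\hookrightarrow N_r(D')$ coming from your lemma that no $v\in N_r$ uses a nugatory crossing; combined with the already-established $\dim N_r(D)\ge n-1$ and the reduced-case computation $\dim N_r(D')=n-1$, this suffices, so you need not verify the reverse map. Second, smoothing one nugatory crossing merges two regions and can create new nugatory crossings elsewhere, so ``smooth those crossings'' should be read as ``iterate until none remain''; the crossing count strictly decreases, so this terminates, and $n$ is preserved since every nugatory crossing is a self-crossing.
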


Before ending this section, we want to fix two conventions we will use throughout and mention two useful propositions about region crossing changes. First all diagrams mentioned in this paper are non-split. Besides, sometimes we will abuse our notation, letting $L=K_1\cup K_2\cup \cdots\cup K_n$ refer both to a link diagram and the link itself, so is each component $K_i$ of $L$. It is not difficult to determine the precise meaning from context. Given a diagram $D$ of $L=K_1\cup K_2\cup \cdots\cup K_n$, we define a set of crossing points of $D$, say $P$, are \textit{region crossing change admissible} if we can obtain a new link diagram $D'$ from $D$ by a sequence of region crossing changes, here $D'$ is obtained from $D$ by taking crossing changes on every crossing point of $P$. Then we have
\begin{proposition}
Let $L$ be a link diagram, and $L_1$ is a sub-link of $L$. Choose a set of crossing points of $L_1$, say $P$, if $P$ is region crossing change admissible on the diagram of $L_1$, then it is also region crossing change admissible on the diagram of $L$.
\end{proposition}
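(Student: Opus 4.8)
The plan is to reduce a region crossing change sequence on $L_1$ to one on $L$ by "filling in" the extra regions contributed by the strands of $L\setminus L_1$. Suppose $P$ is region crossing change admissible on the diagram of $L_1$, so there is a set $\mathcal{R}_1$ of regions of the diagram $D_1$ of $L_1$ whose associated crossing changes switch exactly the crossings in $P$. When we add the strands of $L\setminus L_1$ back to obtain the diagram $D$ of $L$, each region $r$ of $D_1$ is subdivided into several regions $r^{(1)},\dots,r^{(k_r)}$ of $D$. The crossings on the boundary of $r$ in $D_1$ are distributed among the boundaries of these pieces, but every such crossing still lies on the boundary of \emph{at least one} piece, and new crossings (those involving a strand of $L\setminus L_1$) appear on these boundaries too. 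The first step is to choose, for each $r\in\mathcal{R}_1$, the \emph{full} collection of subregions $r^{(1)},\dots,r^{(k_r)}$: taking region crossing changes on all pieces of $r$ simultaneously switches precisely the crossings of $D$ that lie on the boundary of $r$ in $D_1$, since any crossing interior to $r$ in $D$ gets hit an even number of times (once for each of the two pieces it separates) and hence is unchanged, while any crossing on the old boundary of $r$ gets hit an odd number of times.

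The second step is to verify the cancellation on the crossings of $D$ that are \emph{not} crossings of $D_1$ — that is, crossings on a strand of $L\setminus L_1$. Let $\mathcal{R}$ be the union over $r\in\mathcal{R}_1$ of all subregions of $r$. A crossing $q$ of $D$ lying on a strand of $L\setminus L_1$ is interior to some region $r$ of $D_1$ (it lies in the complement of $L_1$), so by the parity count in the previous paragraph it is switched an even number of times by the region crossing changes on the pieces of $r$, and an even number of times in total; thus $q$ is unchanged. For a crossing $p$ of $D_1$: the pieces of $r$ for $r\in\mathcal{R}_1$ switch $p$ exactly when $r$ had $p$ on its boundary in $D_1$, so the total parity with which $p$ is switched equals the parity with which $p$ was switched by $\mathcal{R}_1$ on $D_1$, namely $1$ if $p\in P$ and $0$ otherwise. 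Hence region crossing changes on $\mathcal{R}$ realize exactly the crossing changes on $P$, proving $P$ is region crossing change admissible on $D$.

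The one point that needs a little care — and which I expect to be the main obstacle — is the parity bookkeeping for crossings \emph{interior} to a region $r$ of $D_1$: such a crossing $q$ lies on the common boundary of exactly two subregions $r^{(i)}, r^{(j)}$ of $r$ (the two local faces of $D$ it separates), so it is counted with multiplicity $2\equiv 0 \pmod 2$ when we take \emph{all} pieces of $r$. This is most cleanly phrased using the incidence-matrix language of Section 2: summing the rows of $M(D)$ corresponding to all subregions of $r$ yields the indicator vector (over $\mathbb{Z}_2$) of the crossings on the $D_1$-boundary of $r$, which is exactly the row of $M(D_1)$ for $r$ with zeros in the new coordinates. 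Summing over $r\in\mathcal{R}_1$ then gives the indicator vector of $P$ inside the coordinates of $D_1$ and zero elsewhere, which is what we want. A minor secondary point is that one should note a subregion of $r$ might be incident to a crossing of $D_1$ on which two of its own boundary arcs meet; such a crossing is still on the $D_1$-boundary of $r$ and the count above (each crossing on $\partial r$ lies on an odd number of pieces, counted with multiplicity) still goes through, since locally at such a crossing an odd number of the four quadrants belong to pieces of $r$.
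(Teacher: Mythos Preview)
Your approach is exactly the one the paper takes: replace each region $r\in\mathcal R_1$ of $D_1$ by the collection of all its sub-regions in $D$, and check that the resulting region crossing changes have the same net effect. The paper's proof is only a few lines long and explicitly notes that this substitution works \emph{provided there are no nugatory crossings}, deferring the nugatory case to a separate technique (Proposition~2.1 of [2]) rather than handling it directly.

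The gap in your write-up is precisely that nugatory case, and the resolution you propose in the last paragraph is incorrect. Your assertion that ``at such a crossing an odd number of the four quadrants belong to pieces of $r$'' fails: if $c$ is a nugatory crossing of $D_1$ and $r$ is the region occupying the two \emph{opposite} quadrants at $c$, then exactly two quadrants lie in $r$; when the extra strands of $L\setminus L_1$ separate those two quadrants into distinct sub-regions of $D$, your set $\mathcal R$ switches $c$ twice (net zero) whereas $\mathcal R_1$ switched it once. The same parity failure appears for new crossings: if a strand of some $K_j\in L\setminus L_1$ enters $r$, makes a kink, and exits (an $\alpha$-shaped arc), the self-crossing $c$ is nugatory in $D$ and only \emph{three} distinct sub-regions of $r$ meet it, so $c$ is switched an odd number of times rather than an even one. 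Your parenthetical ``once for each of the two pieces it separates'' treats a crossing as if it were an edge; a crossing is locally incident to four quadrants, not two. There is also a smaller slip: a crossing between $L_1$ and $L\setminus L_1$ lies \emph{on} a strand of $L_1$, hence on the boundary of two regions of $D_1$, not in the interior of one (the conclusion for such crossings is in fact correct, but not for the reason you give). In short, the decomposition idea is right and matches the paper, but the nugatory-crossing bookkeeping genuinely requires an extra ingredient, which the paper imports from~[2].
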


\begin{proof}
Notice that any region of $L_1$ is the union of some regions of $L$, if there is no nugatory crossing, then the effect of region crossing changes on the union of these regions is equivalent to the effect on the original one. If there exist some nugatory crossings, with the skill of handling nugatory crossings, see Proposition 2.1 in [2], we can still suitably choose some regions of $L$ which satisfy our requirement. This finishs the proof.
\end{proof}

Finally we want to recall a result in [2].
\begin{proposition}$^{[2]}$
Given an $n$-component link diagram $L=K_1\cup \cdots \cup K_n$, each crossing point of $K_i\cap K_i$ $(1\leq i\leq n)$ is region crossing change admissible, and each pair of crossing points of $K_i\cap K_j$ $(1\leq i< j\leq n)$ are region crossing change admissible.
\end{proposition}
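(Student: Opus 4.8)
The plan is to dispatch the two assertions separately, reducing each — via Proposition 2.2 — to a small sub-link and then invoking either Theorem 1.1 or the rank computation of Theorem 2.1. For the first assertion, let $p$ be a self-crossing of $K_i$. Restricting attention to the sub-diagram consisting of $K_i$ alone, which is a knot diagram, the point $p$ is an ordinary crossing of it, so Theorem 1.1 produces a family of regions of the diagram of $K_i$ whose region crossing changes realize precisely the crossing change at $p$ and at no other crossing. By Proposition 2.2 the same change is realizable by region crossing changes on the full diagram $L$, so $p$ is region crossing change admissible.

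For the second assertion, let $p,q$ be two crossings lying in $K_i\cap K_j$. By Proposition 2.2 it suffices to prove that $\{p,q\}$ is admissible on the $2$-component sub-diagram $D=K_i\cup K_j$; write $K_1=K_i$, $K_2=K_j$ and let $c$ be the number of crossings of $D$. A set of crossings of $D$ is region crossing change admissible exactly when, viewed as a vector in $\mathbb{Z}_2^{c}$, it lies in the $\mathbb{Z}_2$-row space $W$ of the incidence matrix $M(D)$. By Theorem 2.1, $\dim_{\mathbb{Z}_2}W=c-1$, so $W$ is the kernel of a single nonzero linear functional on $\mathbb{Z}_2^{c}$. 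I claim this functional can be taken to be $f(P)=\#\{\text{mixed crossings in }P\}\ (\mathrm{mod}\ 2)$, where a \emph{mixed} crossing is one at which $K_1$ meets $K_2$. Granting the claim we are done: $f(\{p,q\})=2\equiv 0$, hence $\{p,q\}\in W=\ker f$, i.e. $\{p,q\}$ is admissible on $D$, and therefore, by Proposition 2.2, on $L$.

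To prove the claim it is enough to check (i) every row of $M(D)$ lies in $\ker f$, i.e. each region $r$ of $D$ meets an even number of mixed crossings, and (ii) $f\neq 0$; then $W\subseteq\ker f$ and the dimension count forces equality. Point (ii) is immediate, since $p$ is itself a mixed crossing, so $f(\{p\})=1$. For point (i), traverse the boundary of $r$ as a closed walk: along each edge one runs on a single component; at a self-crossing of $K_1$ or of $K_2$ lying on $\partial r$ one continues on the same component, whereas at a mixed crossing on $\partial r$ one switches from one component to the other, because the four arcs at a mixed crossing alternate between $K_1$ and $K_2$ (a mixed crossing touching $r$ at two opposite corners is traversed twice, contributing two switches). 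Since the walk is closed, the total number of switches is even, so the number of incidences of $\partial r$ with mixed crossings is even; after the standard reductions for nugatory crossings, which is where Proposition 2.1 of [2] is used, this is exactly the statement that the row of $M(D)$ corresponding to $r$ has even overlap with the set of mixed crossings.

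The main obstacle is step (i): making the traversal argument fully rigorous requires care about how a region can sit around a crossing — one corner versus two opposite corners — and about nugatory crossings, which is precisely why the bookkeeping of Proposition 2.1 of [2] is invoked. Once the defect functional $f$ has been pinned down, the rest is immediate from Theorems 1.1 and 2.1 together with Proposition 2.2.
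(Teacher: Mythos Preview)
The paper does not actually prove Proposition 2.3; it is quoted from reference~[2] and used as a black box. So there is no ``paper's proof'' to match against. That said, your argument is correct and self-contained relative to the results stated in the present paper.

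A few remarks on the comparison. Your treatment of the self-crossing case via Theorem~1.1 plus Proposition~2.2 is exactly the natural reduction. For the mixed-crossing case, your approach is non-constructive: you use the rank computation of Theorem~2.1 to identify the row space of $M(D)$ with the kernel of the ``number of mixed crossings mod $2$'' functional, and then observe that any pair $\{p,q\}\subset K_i\cap K_j$ lies in that kernel. This is clean and short. By contrast, the argument in~[2] is constructive: one smooths a mixed crossing to merge the two components into a single knot, applies Theorem~1.1 to the resulting knot diagram, and then tracks what happens at the smoothed crossing (this is the same trick used later in the proofs of Propositions~3.1 and~4.2 here). Your route trades explicitness for brevity; the constructive route actually produces the regions.

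Two small points. First, your key observation~(i) --- that every region is incident to an even number of mixed crossings --- is precisely the fact invoked (without much ceremony) in the proof of Proposition~4.1 of this paper, so you are on safe ground. Second, your parenthetical about a mixed crossing touching a region at two opposite corners is in fact vacuous: a crossing with that property is nugatory, and a nugatory crossing is necessarily a self-crossing (a simple closed curve through the crossing meeting the diagram nowhere else forces the two strands to close up on opposite sides). So no special bookkeeping for nugatory crossings is needed in step~(i), and the traversal argument goes through directly.
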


\section{The case of 3-component links}
In this section we will prove Theorem 1.3 for the case of 3-component links.
\begin{proposition}
Region crossing change is an unknotting operation on a diagram of $L=K_1\cup K_2\cup K_3$ if and only if $L$ is a proper link.
\end{proposition}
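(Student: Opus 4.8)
The strategy is to reduce Proposition 3.1 to linear algebra over $\mathbb{Z}_2$ using the incidence matrix $M(D)$ from Section 2, and to use the admissibility results (Propositions 2.2 and 2.3) to construct the required region crossing changes in the sufficiency direction. For the \emph{necessity} direction, I would argue that if $L=K_1\cup K_2\cup K_3$ is not proper, then one can produce an invariant obstruction: a region crossing change on any single region switches an even number of crossings on each component-pair $K_i\cap K_j$ modulo the relevant parity, so the quantity $\sum_{j\neq i} lk(K_i,K_j)\bmod 2$ — equivalently the self-writhe-type parity that governs when the $i$-th component can be unknotted and unlinked from the rest — is preserved. Concretely, trivializing $L$ requires in particular making $lk(K_i,K_j)=0$ for all pairs, and I would show that the span of the rows of $M(D)$, intersected with the coordinate subspace indexed by crossings of $K_i\cap K_j$, always has even "total count" against a suitable linear functional when $L$ is non-proper; this is the obstruction already isolated for $2$-component links in Theorem 1.2, lifted to the sub-links $K_i\cup K_j$ of $L$.

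For the \emph{sufficiency} direction, assume $L$ is proper. The goal is to realize, via region crossing changes, an arbitrary prescribed crossing change at each crossing point of $D$, subject only to the constraint dictated by Theorem 2.1 (the rank of $M(D)$ is $c-n+1=c-2$, so the admissible set of crossing-change patterns is a codimension-$2$ affine-linear condition, i.e.\ two parity constraints). First I would use Proposition 2.3: every self-crossing of each $K_i$ is individually admissible, so we may freely switch all self-crossings, hence freely unknot each component in isolation once the linking is handled. The remaining freedom concerns the $K_i\cap K_j$ crossings, which Proposition 2.3 tells us are admissible \emph{in pairs}. The parity of the number of crossings on $K_i\cap K_j$ is $lk(K_i,K_j)\bmod 2$, and the properness hypothesis says $lk(K_1,K_2)+lk(K_1,K_3)\equiv 0$, $lk(K_1,K_2)+lk(K_2,K_3)\equiv 0$, $lk(K_1,K_3)+lk(K_2,K_3)\equiv 0\pmod 2$ — which forces all three pairwise linking numbers to have the \emph{same} parity. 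If all are even, pairing up crossings within each $K_i\cap K_j$ lets us switch all of them and reach the trivial link. The genuinely delicate case is when all three are odd: then within each pair $K_i\cap K_j$ there is one "leftover" crossing that cannot be switched by pairs \emph{inside that sub-link}, but I would switch the three leftover crossings simultaneously — one from each of the three pairs — by exhibiting a single set of regions of the full $3$-component diagram $D$ whose boundary crossings are exactly those three (plus possibly self-crossings, which we then correct for free). Producing this set of regions is where I expect the main work to lie: it amounts to showing that the vector in $\mathbb{Z}_2^c$ supported on one chosen crossing from each $K_i\cap K_j$ lies in the row span of $M(D)$, which should follow from a dimension count using Theorem 2.1 together with an explicit description of the "defect" vectors that fail to be in the span for each $2$-component sub-link.

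I would organize the write-up as: (1) restate the parity constraints and observe properness $\iff$ all $lk(K_i,K_j)$ share a common parity; (2) necessity, by lifting the $2$-component obstruction of Theorem 1.2 to each sub-link $K_i\cup K_j$; (3) sufficiency in the "all even" case, immediate from Proposition 2.3; (4) sufficiency in the "all odd" case, the crux, where I locate the three leftover crossings and prove the corresponding vector is in the row space of $M(D)$ — here I would either give an explicit region-chasing argument on $D$ or invoke Theorem 2.1 to count ranks and show the needed vector cannot avoid the span. The main obstacle is step (4): controlling exactly which mixed-crossing patterns are simultaneously admissible on the \emph{full} three-component diagram, as opposed to on the two-component sub-links where admissibility is already understood; the passage from sub-links to the whole link (the direction \emph{not} covered by Proposition 2.2, which only goes from sub-link to link for patterns supported on the sub-link) is precisely what makes the odd-odd-odd case work and is the heart of the $3$-component base step for the induction in Section 4.
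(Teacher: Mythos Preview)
Your plan is correct and would yield a valid proof, but it diverges from the paper's argument in both directions. For necessity, you invoke the invariance of $\sum_{j\neq i} lk(K_i,K_j)\pmod 2$ under a single region crossing change; the paper instead argues case by case, assuming region crossing change unknots a non-proper $L$ and in each case producing enough admissible crossing patterns to force $\operatorname{rank} M(D)\geq c-1$, contradicting Theorem~2.1 (the paper itself remarks afterward that your parity approach---which is exactly Proposition~4.1---is an alternative). For sufficiency in the all-odd case, you propose to show that the triple $\{p_1,p_2,p_3\}$ lies in the row space of $M(D)$ by a pure dimension count, in effect identifying the annihilator of the row space with the span of the two independent parity functionals and checking the triple is killed by them. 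The paper instead \emph{smooths} one of the $p_i$ to get a $2$-component diagram, uses Proposition~2.3 there to switch the other two, and shows that if none of the three possible smoothings also switches the smoothed crossing, then every pair of non-self-crossings becomes admissible, again giving $\operatorname{rank} M(D)\geq c-1$. Your annihilator argument is tidier for $n=3$; the paper's smoothing device, however, is precisely the mechanism that drives the induction in Proposition~4.2, so their version is tailored to set up Section~4. One minor wording issue: your phrase ``lifted to the sub-links $K_i\cup K_j$'' is misleading, since admissibility on the full diagram does not reduce to admissibility on a sub-link diagram; the invariant you actually use (the parity of $\sum_{j\neq i} lk(K_i,K_j)$ computed on the full diagram) is the correct one.
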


\begin{proof}
Let us consider the sufficient part first. Now $L$ is a proper link and assume $D$ is a diagram of $L$, with crossing number $c$. Let $P$ denote an unknotting set of crossing points, i.e. if one takes crossing changes on all points of $P$ then the new diagram represents a trivial link. Obviously if $lk(K_i, K_j)$ is odd (even), then $(K_i\cap K_j)\cap P$ contains odd (even) crossing points. Since $L$ is proper, we can divide our discussion in three cases:
\begin{itemize}
\item $K_1\cap K_3=\varnothing$.

Since $D$ is non-split and $L$ is proper, it follows that $lk(K_1, K_2)=lk(K_2, K_3)=0$ $($mod 2$)$. Then according to Proposition 2.3, we conclude that $P$ is region crossing change admissible.
\item $K_1\cap K_2\neq\varnothing$, $K_2\cap K_3\neq\varnothing$, $K_3\cap
    K_1\neq\varnothing$ and $lk(K_1, K_2)=lk(K_2, K_3)=lk(K_3, K_1)=0$ $($mod 2$)$.

    As above, $P$ is also region crossing change admissible in this case.
\item $K_1\cap K_2\neq\varnothing$, $K_2\cap K_3\neq\varnothing$, $K_3\cap K_1\neq\varnothing$ and $lk(K_1, K_2)=lk(K_2, K_3)=lk(K_3, K_1)=1$ $($mod 2$)$.

In this case we claim that $\forall p_1\in K_1\cap K_2$, $\forall p_2\in K_2\cap K_3$ and $\forall p_3\in K_3\cap K_1$, $\{p_1, p_2, p_3\}$ are region crossing change admissible. Then combining Proposition 2.3, the conclusion follows. According to the orientation of $K_1$ and $K_2$, we smooth $p_1$ such that $K_1$ and $K_2$ become one component, say $K'$. Now there are only two components in the new diagram, $K'$ and $K_3$. Therefore it follows from Proposition 2.3, $\{p_2, p_3\}$ are region crossing change admissible on $K'\cup K_3$. In other words, there exist some regions of $D$ such that taking region crossing changes on them, $\{p_2, p_3\}$ will be changed. If these region crossing changes also changes $p_1$, then these regions satisfy our requirement. Otherwise we smooth $p_2$ and $p_3$ respectively. If all these three cases can not offer some regions as required, then together with Proposition 2.3 it means that for any pair points of $(K_1\cap K_2)\cup(K_2\cap K_3)\cup(K_3\cap K_1)$, they are region crossing changes admissible. Hence with those rows of $M(D)$, we can construct a matrix as below
\begin{center}
$\begin{bmatrix}
 1& &&&&&& \\
 & \ddots &&&&&&\\
 & & 1& &&&& \\
 &&&1 & 1 &   &  &   \\
 &&&  & 1 & 1 &  &   \\
 &&&  &   &\ddots&\ddots&  \\
 &&&  &   &  & 1 & 1 \\
 &&&1  &   &   &   & 1\\
\end{bmatrix}$,
\end{center}
where the top left identity submatrix corresponds to those self-crossing points, i.e. the crossing points of $K_i\cap K_i$, and the right bottom submatrix is referred to those crossing points between different components. It is obvious that the rank of this matrix is $c-1$, hence the rank of $M(D)$ is at least $c-1$, which contradicts with Theorem 2.1. Hence we finish the proof of the sufficient part.
\end{itemize}

Now we turn to the proof of the necessary part. Assume $L$ is not a proper link, there are two possibilities:
\begin{itemize}
\item $K_1\cap K_3=\varnothing$. It follows that $lk(K_1, K_2)$ and $lk(K_2, K_3)$ can not be both even. Without loss of generality, we suppose $lk(K_1, K_2)$ is odd.

    If $lk(K_2, K_3)$ is odd. Since region crossing change is an unknotting operation, we conclude that for any $p_1\in K_1\cap K_2$ and $p_2\in K_2\cap K_3$, $\{p_1, p_2\}$ are region crossing change admissible. It means that any pair of non-self-crossing points are region crossing change admissible. Then we can construct a matrix with those rows of $M(D)$ as above. The contradiction follows.

    If $lk(K_2, K_3)$ is even. Since region crossing change is an unknotting operation, therefore any crossing point of $K_1\cap K_2$ is region crossing change admissible. One can also construct a matrix as above, which also leads to a contradiction.
\item $K_1\cap K_2\neq\varnothing$, $K_2\cap K_3\neq\varnothing$, $K_3\cap K_1\neq\varnothing$. Without loss of generality, we assume $lk(K_1, K_2)$ is odd, and $lk(K_2, K_3)$ is even. We continue our discussion in two cases.

    If $lk(K_3, K_1)$ is odd. Since for any $p_1\in K_1\cap K_2$, $p_2\in K_2\cap K_3$ and $p_3\in K_3\cap K_1$, $\{p_1, p_2, p_3\}$ are region crossing change admissible. If region crossing change is an unknotting operation then any crossing point of $K_2\cap K_3$ is region crossing change admissible. It follows that any pair crossing points of $(K_1\cap K_2)\cup(K_3\cap K_1)$ are region crossing change admissible. Similarly we can obtain a contradiction as above.

    If $lk(K_3, K_1)$ is even. In this case any crossing point of $K_1\cap K_2$ is region crossing change admissible. The contradiction follows similarly.
\end{itemize}

In conclusion, if $L$ is not proper then region crossing change is impossible to be an unknotting operation. The proof is finished.
\end{proof}

It is worth noting that the proof of the necessary part is direct, all the possible cases of a 3-component link are discussed. In Section 4, Proposition 4.1 will offer a solution of it with another viewpoint.

\section{The proof of the main theorem}
Before giving the proof of Theorem 1.3, we need some preliminary results.
\begin{proposition}
If a link diagram $L=K_1\cup K_2\cup \cdots\cup K_n$ can be obtained from another link diagram $L'=K'_1\cup K'_2\cup \cdots\cup K'_n$ by a sequence of region crossing changes, then $($after suitably ordered if necessary$)$
\begin{center}
$\sum\limits_{j\neq i} lk(K_i, K_j)=\sum\limits_{j\neq i} lk(K'_i, K'_j)$ $(mod$ $2)$
\end{center}
for all $1\leq i\leq n$.
\end{proposition}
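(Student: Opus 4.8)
The plan is to track how a single region crossing change affects the pairwise linking numbers mod 2, and then sum. Recall that a region crossing change at a region $\mathcal R$ switches precisely the crossings on $\partial\mathcal R$. A crossing change at a crossing between $K_i$ and $K_j$ (with $i\neq j$) changes $lk(K_i,K_j)$ by $\pm 1$, hence flips its parity; a crossing change at a self-crossing of some $K_i$ does not affect any linking number. So the key bookkeeping quantity is, for each unordered pair $\{i,j\}$, the parity of the number of $K_i$--$K_j$ crossings lying on $\partial\mathcal R$.

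The first step is the local claim: for any region $\mathcal R$ and any component $K_i$, the number of crossings on $\partial\mathcal R$ that involve $K_i$ and some \emph{other} component $K_j$ ($j\neq i$) is even. This is because, as one traverses $\partial\mathcal R$, the strand of $K_i$ enters and leaves the region; each maximal arc of $K_i$ on $\partial\mathcal R$ contributes two crossings of $K_i$ with strands of other components at its two endpoints — unless an endpoint is a self-crossing of $K_i$, but a self-crossing contributes \emph{two} strands of $K_i$ to $\partial\mathcal R$ and so again does not change the parity count of ``$K_i$ meets something else.'' Making this precise is the one point requiring care: I would argue by walking around the boundary circle of $\mathcal R$ and looking at the cyclic sequence of strands, noting that strands of $K_i$ appear in a pattern whose $K_i$/not-$K_i$ transitions must occur an even number of times. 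I expect this parity argument — getting the treatment of self-crossings and of the case $\mathcal R$ touched by $K_i$ along several arcs exactly right — to be the main obstacle; everything else is formal.

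Granting the local claim, a single region crossing change at $\mathcal R$ flips the parity of $lk(K_i,K_j)$ for those pairs $\{i,j\}$ with an odd number of $K_i$--$K_j$ crossings on $\partial\mathcal R$; call this set of pairs $S_\mathcal R$. The local claim says each vertex $i$ has even degree in the graph $S_\mathcal R$ on vertex set $\{1,\dots,n\}$. Hence for each fixed $i$, the quantity $\sum_{j\neq i} lk(K_i,K_j)\bmod 2$ changes by $|\{j : \{i,j\}\in S_\mathcal R\}| \equiv 0 \pmod 2$, i.e.\ it is unchanged by the region crossing change at $\mathcal R$.

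Finally, a sequence of region crossing changes is a composition of such moves, so $\sum_{j\neq i} lk(K_i,K_j)\bmod 2$ is invariant under the whole sequence, giving $\sum_{j\neq i} lk(K_i,K_j) \equiv \sum_{j\neq i} lk(K'_i,K'_j) \pmod 2$ for each $i$ (with the components matched up by the obvious correspondence, since region crossing changes leave the underlying $4$-valent graph and its strand decomposition into components unchanged, so no genuine reordering is needed beyond labelling). This completes the argument.
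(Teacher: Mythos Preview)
Your proposal is correct and follows essentially the same approach as the paper: both reduce to showing that a single region crossing change preserves $\sum_{j\neq i} lk(K_i,K_j) \pmod 2$, via the observation that the boundary of any region carries an even number of crossings between $K_i$ and $L-K_i$. The paper asserts this parity fact in one line (``it is easy to observe''), whereas you sketch the walk-around-the-boundary argument and package the bookkeeping into the auxiliary graph $S_{\mathcal R}$; this is added detail rather than a different method.
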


\begin{proof}
It suffices to show that for any $1\leq i\leq n$, $\sum\limits_{j\neq i} lk(K_i, K_j)$ $($mod 2$)$ is unaffected by one region crossing change. In fact, it is easy to observe that given a region of the diagram, there are even crossing points on the boundary that are generated by $L-K_i$ and $K_i$. Consequently $\sum\limits_{j\neq i} lk(K_i, K_j)$ $($mod 2$)$ is invariant, then the result follows.
\end{proof}

The next proposition plays an important role in the proof of the main theorem, it can be regarded as a generalization of Proposition 2.3.

\begin{proposition}
Given a link diagram $L$, $\{K_1, \cdots, K_n\}$ are some components of it. If $K_i\cap K_{j}\neq\varnothing$ for all $\{i, j\}$ which satisfy $|i-j|=1$ or $|i-j|=n-1$, then for any crossing point $p_1\in K_1\cap K_2, \cdots, p_{n-1}\in K_{n-1}\cap K_n, p_n\in K_n\cap K_1$, $\{p_1, p_2, \cdots, p_n\}$ are region crossing change admissible.
\end{proposition}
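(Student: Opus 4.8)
The plan is to argue by induction on $n$, where the base cases $n=1$ and $n=2$ are exactly Proposition~2.3, and the case $n=3$ is the admissibility claim established inside the proof of Proposition~3.1. So assume $n\geq 4$ and that the statement holds for all shorter cyclic chains of components. Given crossing points $p_1\in K_1\cap K_2,\dots,p_{n-1}\in K_{n-1}\cap K_n,p_n\in K_n\cap K_1$, the idea is to collapse the cycle of length $n$ to a cycle of length $n-1$ by smoothing one crossing, apply the induction hypothesis there, and then transfer the conclusion back. Concretely, using the orientations of $K_1$ and $K_n$, smooth the crossing $p_n\in K_n\cap K_1$ so that $K_n$ and $K_1$ merge into a single component $K'$. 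In the new diagram the components $K', K_2, K_3,\dots,K_{n-1}$ still form a cyclic chain: $K'\cap K_2\supseteq\{p_1\}\neq\varnothing$ (since $K_1\cap K_2\neq\varnothing$), $K_{n-1}\cap K'\supseteq\{p_{n-1}\}\neq\varnothing$ (since $K_{n-1}\cap K_n\neq\varnothing$), and the intermediate incidences $K_i\cap K_{i+1}$ are untouched. That is a cyclic chain of length $n-1$, so by the induction hypothesis $\{p_1,p_2,\dots,p_{n-1}\}$ is region crossing change admissible on $K'\cup K_2\cup\cdots\cup K_{n-1}$, hence, by Proposition~2.2, also on the full diagram $L$. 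This produces some collection of regions of $L$ whose region crossing changes flip exactly $\{p_1,\dots,p_{n-1}\}$.

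Now there are two possibilities. If that collection of regions also flips $p_n$, we are done immediately: those regions realize $\{p_1,\dots,p_n\}$. Otherwise, the regions flip $\{p_1,\dots,p_{n-1}\}$ but not $p_n$, and we are stuck with this particular smoothing. The natural move is then to re-run the argument smoothing a different crossing of the cycle instead — say smooth $p_1\in K_1\cap K_2$ to merge $K_1$ and $K_2$, obtaining the cyclic chain $(K_1K_2), K_3,\dots,K_n$ of length $n-1$, and conclude $\{p_2,\dots,p_n\}$ is admissible on $L$; and likewise at every crossing $p_k$. If at any stage the resulting regions also flip the omitted point $p_k$, we are done. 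So the remaining case is the bad one: for every $k$, there exist regions of $L$ flipping exactly $\{p_1,\dots,p_n\}\setminus\{p_k\}$.

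In that bad case the conclusion will follow from a rank argument exactly parallel to the one in Proposition~3.1. Combining the $n$ "omit-one" vectors with the self-crossing vectors supplied by Proposition~2.3 (each self-crossing of each $K_i$ is individually admissible, giving an identity block), one can write down a submatrix of $M(D)$ built from rows of $M(D)$ whose span contains: a full identity block on all self-crossings, and — on the coordinates $p_1,\dots,p_n$ — the $n$ vectors $\mathbf{1}-e_k$. Over $\mathbb{Z}_2$ the vectors $\{\mathbf{1}-e_k : 1\le k\le n\}$ span the whole even-weight... more precisely their span has dimension $n-1$ and, together with the observation that we may also assume $\mathbf{1}$ itself is not in the span (else some $e_k=\mathbf{1}-(\mathbf{1}-e_k)$ is realized, flipping a single $p_k$, which quickly gives $\{p_1,\dots,p_n\}$ as well via Proposition~2.3 on the other components), one gets that the rank of this piece of $M(D)$ forces $\mathrm{rank}\,M(D)\geq c-n+2$, contradicting Theorem~2.1 which says $\mathrm{rank}\,M(D)=c-n+1$. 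Hence the bad case is impossible, and in every case $\{p_1,\dots,p_n\}$ is region crossing change admissible.

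The main obstacle, and the step deserving the most care, is the last one: making the rank bookkeeping precise. One must verify that the "omit-one" region vectors together with the self-crossing vectors really do contribute $n-1$ extra independent directions beyond what is forced, and handle cleanly the degenerate sub-case where $\mathbf 1$ lies in their span (equivalently, where some single crossing $p_k$ is admissible), checking there that this still yields admissibility of the full set rather than derailing the count. The smoothing/merging step also requires the routine but necessary check that smoothing an inter-component crossing genuinely merges the two components into one (which is where the orientations are used) and does not accidentally split off anything, so that the reduced diagram is again a bona fide non-split diagram to which the induction hypothesis and Proposition~2.2 apply.
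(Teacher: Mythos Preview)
Your overall strategy---induction on $n$, smooth a crossing to shorten the cycle, and fall back on a rank argument via Theorem~2.1 in the ``bad case''---is exactly the paper's approach. But the rank argument as you sketch it does not go through, and the reason is that you have skipped a structural reduction that the paper makes before ever reaching that argument.

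First, the paper splits the inductive step into two cases. In the first, there exist non-adjacent indices $i,j$ (i.e.\ $1<j-i<n-1$) with $K_i\cap K_j\neq\varnothing$; then one picks $q\in K_i\cap K_j$ and applies the induction hypothesis to the two shorter cycles $\{p_1,\dots,p_{i-1},q,p_j,\dots,p_n\}$ and $\{p_i,\dots,p_{j-1},q\}$, whose symmetric difference is the desired set. Only in the second case---no diagonal intersections---does the paper pass to the sub-link $L'=K_1\cup\cdots\cup K_n$ and run the smoothing-plus-rank argument there. This case distinction is not cosmetic: it guarantees that in $L'$ the only non-self-crossings lie in the $n$ blocks $K_i\cap K_{i+1}$, which is what makes the matrix bookkeeping close up.

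Second, your rank count uses only the self-crossing vectors and the $n$ omit-one vectors. That gives a span of dimension at most $s+(n-1)$, where $s$ is the number of self-crossings. Since each $K_i\cap K_{i+1}$ is nonempty and consists of an even number of crossings, the number $t$ of non-self-crossings satisfies $t\geq 2n$, so $s+(n-1)=c-t+n-1\leq c-n-1$, which is \emph{below} the Theorem~2.1 value $c-n+1$; no contradiction arises. The paper repairs this by also feeding in the pair-admissibility vectors from Proposition~2.3 (the $A$-blocks, one per adjacent pair $K_i\cap K_{i+1}$), and then combining two of the omit-one vectors with pair-admissibility to produce full identity blocks on the last two groups of crossings (the $B,I$ rows). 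With that, and crucially using that there are exactly $n$ pairwise blocks (no diagonals), the constructed matrix has rank $c-n+2$, contradicting Theorem~2.1.

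A smaller point: your appeal to Proposition~2.2 after smoothing is misplaced. The smoothed diagram is not a sub-link of $L$; rather, its regions are in correspondence with those of $L$ (two of them merging at the smoothed crossing), which is why a region set realising $\{p_1,\dots,p_{n-1}\}$ in the smoothed diagram lifts to one in $L$ realising $\{p_1,\dots,p_{n-1}\}$ and possibly $p_n$. Proposition~2.2 is instead what lets you pass from $L$ to the sub-link $L'=K_1\cup\cdots\cup K_n$ \emph{before} smoothing, so that the rank argument is run in a diagram with exactly $n$ components.
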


\begin{proof}
When $n=1$ or 2, the statement follows from Proposition 2.3, the case $n=3$ follows from Proposition 2.2 and the claim in Proposition 3.1. Now we assume the statement is correct for $n\leq k$, it suffices to show it is also correct for $n=k+1$.

If there exist a pair of integers $\{i, j\}$ with $1<j-i<k$, such that $K_i\cap K_j\neq \varnothing$, then we can choose a crossing point $q$ from $K_i\cap K_j$. According to the assumption, $\{p_1, \cdots, p_{i-1}, q, p_{j}, \cdots, p_{k+1}\}$ and $\{p_i, \cdots, p_{j-1}, q\}$ are both region crossing change admissible. As a result, $\{p_1, p_2, \cdots, p_{k+1}\}$ are region crossing change admissible.

If for any $\{i, j\}$ which satisfy $1<j-i<k$, $K_i$ and $K_j$ have no intersection, let us consider the diagram of the sub-link $L'=K_1\cup \cdots \cup K_{k+1}$. According to Proposition 2.2, it is sufficient to prove $\{p_1, p_2, \cdots, p_{k+1}\}$ are region crossing change admissible on the diagram of $L'$. Similar to the proof of Proposition 3.1, we can smooth $p_{k+1}$ to obtain a $k$-component link diagram $L''$. By induction, $\{p_1, p_2, \cdots, p_k\}$ are region crossing change admissible on the diagram of $L''$. If the corresponding region crossing changes of $L'$ will affect $p_{k+1}$, then the result follows. Consequently we only need to consider the case whichever crossing point of $\{p_1, p_2, \cdots, p_{k+1}\}$ is chosen to smooth, the corresponding region crossing changes will not affect itself. In other words, any $k$ points of $\{p_1, p_2, \cdots, p_{k+1}\}$ are region crossing change admissible on the diagram of $L'$. Due to these facts, we can use the rows of $M(L')$ to construct a $c\times c$ matrix $M$ as below, here $c$ denotes the crossing number of $L'$.
\begin{center}
$M=\begin{bmatrix}
 I& &&&& \\
 & A &&&&\\
 & & \ddots& && \\
 &&&A &  &      \\
 &B  & \cdots & B & I &   \\
 &B&\cdots& B &   & I\\
\end{bmatrix}$,

where $I=\begin{bmatrix}
 1& && &\\
 & 1 &&&\\
 & & \ddots& & \\
 &&&1& \\
 &&&&1\\
\end{bmatrix}$, $A=\begin{bmatrix}
1 & 1 &   &  &   \\
  & 1 & 1 &  &   \\
  &   &\ddots&\ddots&  \\
  &   &  & 1 & 1 \\
1 &   &   &   & 1\\
\end{bmatrix}$ and $B=\begin{bmatrix}
1 &  &   &  &   \\
1  &  &  &  &   \\
\vdots &   &&&  \\
1  &   &  &  & \\
1 &   &   &   & \\
\end{bmatrix}$.
\end{center}
As a result, $rank M(L')\geq rank M=c-k+1$, which contradicts the fact $rank M(L')=c-(k+1)+1=c-k$. The proof is finished.
\end{proof}

Now we are going to turn to the proof of Theorem 1.3.
\begin{proof}
The necessary part directly follows from Proposition 4.1, therefore it suffices to prove the sufficient part of the theorem. If $lk(K_i, K_j)$ is even for all $1\leq i<j\leq n$, the result follows from Proposition 2.3.

Therefore we assume there exist some pairs of components with odd linking number. Let us construct a graph $G$ which contains $n$ vertices, each vertex $v_i$ corresponds to a component $K_i$ of $L$. If $lk(K_i, K_j)$ is odd, then we add an edge between $v_i$ and $v_j$. Let $P$ be a unknotting set of $L$. By Proposition 2.3, we can change all the self-intersections and each pair of crossing points between two components in $P$. Thus for any pair of components with odd linking number, there exists one crossing point between them in the remainder of $P$. Hence what we want to do is to remove all the edges from $G$ by region crossing changes. Notice that Proposition 4.2 tells us that a loop of $G$ can be removed by region crossing changes, therefore we can remove loops one by one. Because $L$ is proper, then for any vertex $v_i$ there are even vertices which are adjacent to $v_i$. Hence the process will continue until all the edges have been removed. The proof is finished.
\end{proof}

As we mentioned in Section 1, we can talk about two link diagrams being related by finite region crossing changes only if these two link diagrams are isotopic as 4-valent graphs. Under this condition, it can be proved analogously that $L=K_1\cup K_2\cup \cdots\cup K_n$ and $L'=K'_1\cup K'_2\cup \cdots\cup K'_n$ are related by a sequence of region crossing changes if and only if
\begin{center}
$\sum\limits_{j\neq i} lk(K_i, K_j)=\sum\limits_{j\neq i} lk(K'_i, K'_j)$ $(mod$ $2)$
\end{center}
for all $1\leq i\leq n$.

Given a link diagram $L=K_1\cup K_2\cup \cdots\cup K_n$ and some crossing points of it, say $Q$, a natural question is whether $Q$ is region crossing change admissible? In order to answer this question, we just need to switch all the crossing points of $Q$, then $Q$ is region crossing change admissible if and only if $L$ and the new link $L'$ satisfy the condition above. Or we can construct a graph with vertices $\{v_1, \cdots, v_n\}$ which correspond to the components $\{K_1, \cdots, K_n\}$ respectively. For each non-self-intersection point of $Q$, we add an edge between the corresponding two vertices. Finally we obtain a graph $G(L; Q)$, then it is evident that $Q$ is region crossing change admissible if and only if each vertex of $G(L; Q)$ has even valency.

\section{Region crossing change and Arf invariant}
The aim of this section is to study the relation between region crossing change and Arf invariant. According to Theorem 1.3, region crossing change is an unknotting operation on $L$ if and only if $L$ is proper. Proper links are very special since Arf invariant is well defined on them. Hence a natural question arises: is there any relations between region crossing change and Arf invariant? Before discussing this question, we take a short review of the proper link and its Arf invariant.

Recall that we say a link $L$ is a \emph{proper link} if for any component of $L$, the sum of the linking numbers between this component and the rests is an even integer. According to [7], we can define the Arf invariant of a proper link in this way: let $M=S^3\times [0, 1]$, then $\partial M=S^3\times \{0\}\cup S^3\times \{1\}=\partial M_+\cup \partial M_-$. Given a proper link $L$ and a knot $K$ which are embedded in $\partial M_+$ and $\partial M_-$ respectively, if there exists a regularly embedded 2-manifold $N$ of genus zero such that $\partial N\cap \partial M_+=L$ and $\partial N\cap \partial M_-=K$, then we say $K$ is \emph{related} to $L$. It was proved in [7] that if $K$ and $K'$ are two knots related to the same proper link $L$, then Arf$(K)=$Arf$(K')$. Therefore we can define Arf$(L)\triangleq $Arf$(K)$ where $K$ is a knot related to $L$.

In practice, given a proper link $L=K_1\cup K_2\cup \cdots\cup K_s$ $($without loss of generality, we assume that $L$ is non-split$)$, in order to calculate the Arf invariant of $L$, we can handle it as follows. First choose a crossing point between $K_i$ and $K_j$, then smooth it according to the orientations of $K_i$ and $K_j$$($see the figure below$)$. Now we obtain a proper link with $s-1$ components. Repeating this process until we get one component, i.e. a knot $K$. By the definition above, we have Arf$(L)=$Arf$(K)$.
\begin{center}
\includegraphics{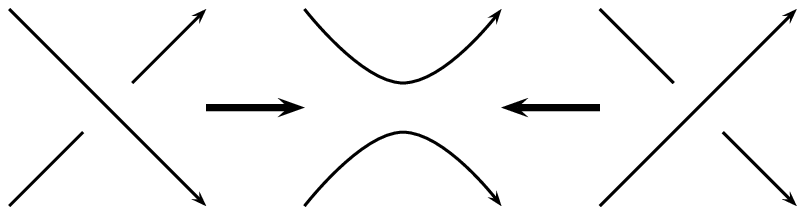} \centerline{\small Figure
3\quad}
\end{center}

In [4], it was shown that one $\sharp$-operation changes the Arf invariant of the knot, i.e. if $K$ and $K'$ are related by one $\sharp$-operation, then Arf$(K)+$Arf$(K')=1$. In [5], a similar result was given for $\triangle$-operation, i.e. if $K$ and $K'$ are related by one $\triangle$-operation, we also have Arf$(K)$+Arf$(K')=1$.

Let $L$ be a diagram of a proper link, and $R$ a region of it. After taking region crossing change on $R$, one obtain a new proper link $($Proposition 4.1$)$, say $L'$. Now we want to investigate the relation between Arf$(L)$ and Arf$(L')$.

Consider the region $R$, we denote the crossing points on the boundary of $R$ by $\{c_1, \cdots, c_n\}$. Color the regions of $L$ in checkerboard fashion, such that $R$ is colored white. For each crossing $c_i$, we assign two integers $a(c_i)$ and $w(c_i)$, according to the figure below.
\begin{center}
\includegraphics{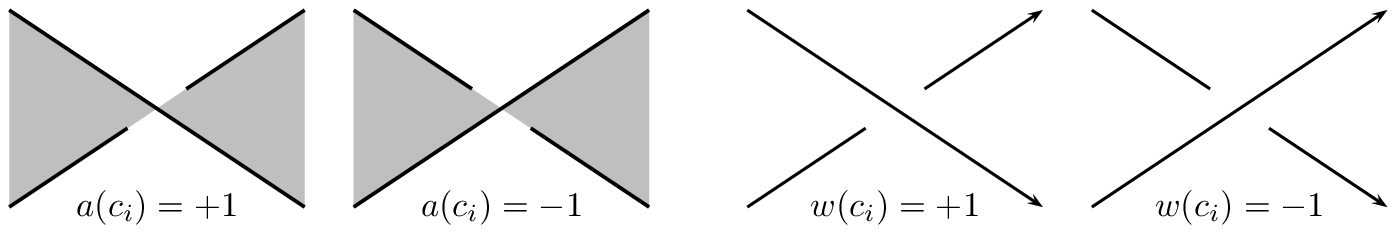} \centerline{\small Figure
4\quad}
\end{center}

The main theorem of this section can be described as below:
\begin{theorem}
Let $L$ be a diagram of a proper link, $L'$ is obtained by taking region crossing change on region $R$ of $L$, then
\begin{center}
\emph{Arf}$(L)+$\emph{Arf}$(L')=$
$\begin{cases}
0$ $ ($\emph{mod 2}$)& \text{\emph{if} $\frac{1}{2}\sum\limits_{i=1}^n(a(c_i)-w(c_i))=0$ $($\emph{mod}$ $ $4 )$;}\\
1$ $ ($\emph{mod 2}$)& \text{\emph{if} $\frac{1}{2}\sum\limits_{i=1}^n(a(c_i)-w(c_i))=2$ $($\emph{mod}$ $ $4 )$.}
\end{cases}$
\end{center}
Here $\{c_1, \cdots, c_n\}$ denote the crossing points on the boundary of $R$.
\end{theorem}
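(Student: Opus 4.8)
The plan is to reduce a single region crossing change to a sequence of ordinary crossing changes and track how each ordinary crossing change affects the Arf invariant, using the Murakami--Nakanishi type formula that one crossing change alters Arf exactly when the two strands involved belong to the same component (of the relevant smoothed link) and have a certain local configuration; more precisely, the standard fact is that if $K'$ is obtained from $K$ by one crossing change, then $\mathrm{Arf}(K)+\mathrm{Arf}(K')$ equals the $\mathbb{Z}_2$-reduction of the linking number of the two-component link obtained by smoothing the crossing. So the first step is to fix the checkerboard coloring with $R$ white, list the crossings $c_1,\dots,c_n$ on $\partial R$, and recall that performing the region crossing change on $R$ is literally switching all of $c_1,\dots,c_n$. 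I would perform these $n$ crossing changes one at a time, in the cyclic order in which they appear along $\partial R$, and for each one record the $\mathbb{Z}_2$ linking-number contribution coming from smoothing $c_i$ in the current diagram.

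The key computation is to identify that $\mathbb{Z}_2$ contribution with the integer $\tfrac12(a(c_i)-w(c_i))$ read off from Figure 4. Here $a(c_i)$ and $w(c_i)$ are (I expect) the two winding/intersection numbers one gets by smoothing $c_i$ in the two possible ways compatible with, respectively, the ``all'' (blackboard) framing and the white-region framing; their difference, halved, measures the relevant local linking datum. The step I would carry out carefully is: in the diagram obtained after switching $c_1,\dots,c_{i-1}$, smoothing $c_i$ according to the orientation produces a link whose components can be described in terms of which arcs of $\partial R$ have already been ``flipped'', and the change in Arf at this step is the parity of the linking number of the two pieces created at $c_i$, which by a local analysis around $c_i$ equals $\tfrac12(a(c_i)-w(c_i)) \pmod 2$. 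Summing over $i$, the total change $\mathrm{Arf}(L)+\mathrm{Arf}(L')$ is $\sum_i \tfrac12(a(c_i)-w(c_i)) \pmod 2$, i.e. it is $0$ or $1$ according to whether $\tfrac12\sum_i(a(c_i)-w(c_i))$ is $0$ or $2$ mod $4$. The bracketing into residues mod $4$ (rather than mod $2$) is exactly what lets the half-integer sum be interpreted: the full sum $\sum_i(a(c_i)-w(c_i))$ is divisible by $4$ precisely when no net change occurs.

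The main obstacle I anticipate is bookkeeping the intermediate links: after switching some of the $c_i$ but not all, the ambient diagram is neither $L$ nor $L'$, and its component structure (hence which smoothings change Arf) depends on the partial sequence. I would handle this by choosing the order along $\partial R$ so that successive smoothed crossings interact predictably, and by proving a lemma that the \emph{sum} of the per-step contributions is independent of the order chosen (it must be, since the endpoints $L$ and $L'$ are fixed), which lets me compute it in the most convenient order --- e.g. first doing all crossings of $\partial R$ that are self-crossings of a single component, then the inter-component ones. A secondary point needing care is the well-definedness of $\mathrm{Arf}(L)$ and $\mathrm{Arf}(L')$: both require $L$ and $L'$ proper, which is guaranteed by Proposition 4.1, and I would note at the outset that the smoothing description of $\mathrm{Arf}$ recalled before Figure 3 makes all the per-step identities purely local, so Figure 4's labels $a(c_i),w(c_i)$ capture everything. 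Once the order-independence lemma and the single local identity ``$\text{change at }c_i = \tfrac12(a(c_i)-w(c_i)) \bmod 2$'' are in hand, the theorem is immediate.
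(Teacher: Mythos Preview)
Your plan has a genuine gap that blocks the induction before it starts. You want to switch the boundary crossings $c_1,\dots,c_n$ one at a time and add up the Arf-jumps. But if some $c_i$ is a crossing between two distinct components $K_a$ and $K_b$, then switching $c_i$ changes $\sum_{j\neq a}lk(K_a,K_j)$ and $\sum_{j\neq b}lk(K_b,K_j)$ by $\pm1$, so the intermediate diagram is \emph{not} a proper link and its Arf invariant is simply undefined. The Murakami formula you invoke (``Arf-jump at a crossing change $=$ linking number of the smoothed two-component link mod $2$'') is a statement about knots; it has no direct meaning for the improper links that appear at your intermediate stages. Your acknowledged ``bookkeeping obstacle'' is therefore not a bookkeeping issue at all --- the objects you want to add are not there.

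There is a second, independent problem with the local identity you propose. In the paper $a(c_i),w(c_i)\in\{\pm1\}$ are local signs read off from the crossing picture (they are not winding numbers), so $\tfrac12(a(c_i)-w(c_i))\in\{-1,0,1\}$ and $A(R)=\sum_i\tfrac12(a(c_i)-w(c_i))=m_{+-}-m_{-+}$ is always even. Your proposed telescoping would yield $A(R)\bmod 2$, which is identically $0$, whereas the theorem asserts the answer is $A(R)/2\bmod 2$. So even granting all your heuristics the arithmetic does not close. More conceptually, the linking number of the two pieces obtained by smoothing $c_i$ is a \emph{global} quantity of the whole diagram; it cannot equal a sign determined by a neighbourhood of $c_i$.

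The paper avoids both difficulties by not iterating crossing changes at all. It builds, from the local data $(a(c_i),w(c_i))$ around $\partial R$, an explicit $n$-component proper link $L_R$ (a chain of Hopf-linked unknots whose clasp signs are prescribed by the $(a,w)$ types) with the property that $L'$ is obtained from $L\sqcup L_R$ by $n$ band-sums across $\partial R$. Robertello's cobordism definition of Arf then gives $\mathrm{Arf}(L)+\mathrm{Arf}(L')\equiv\mathrm{Arf}(L_R)\pmod 2$ in one stroke, and $\mathrm{Arf}(L_R)$ is computed directly: smoothing $n-1$ of its clasps produces a $(2,m_{+-}-m_{-+})$-type torus knot, whose Arf invariant is $0$ or $1$ according as $m_{+-}-m_{-+}\equiv 0$ or $2\pmod 4$. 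The construction of $L_R$ and the connected-sum/cobordism step are the missing ideas in your proposal.
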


If we denote $\frac{1}{2}\sum\limits_{i=1}^n(a(c_i)-w(c_i))$ by $A(R)$, in fact $A(R)$ is an even integer $($see the proof below$)$, and now the equality above can be written as
\begin{center}
Arf$(L)+$Arf$(L')=$
$\begin{cases}
0$ $ ($mod 2$)& \text{if $A(R)=0$ $(${mod} 4$ )$;}\\
1$ $ ($mod 2$)& \text{if $A(R)=2$ $(${mod} 4$ )$.}
\end{cases}$
\end{center}
We remark that when $R$ is the changed region in $\sharp$-operation, it is easy to find that $A(R)=2$, therefore after one $\sharp$-operation the Arf invariant will change. Note that the equality above is also valid for $n$-gon move defined in [1].

Next we give the proof of Theorem 5.1.

\begin{proof}
The idea of the proof basically comes from the related result in [4] and [5]. The key point is that with the given region $R$, there exists an $n$-component proper link $L_R$ which is completed determined by $\{a(c_1), \cdots, a(c_n)\}$ and $\{w(c_1), \cdots, w(c_n)\}$, such that Arf$(L)+$Arf$(L')=$ Arf$(L_R)$ $($mod 2$)$. The figure below shows how to find this proper link $L_R$.
\begin{center}
\includegraphics{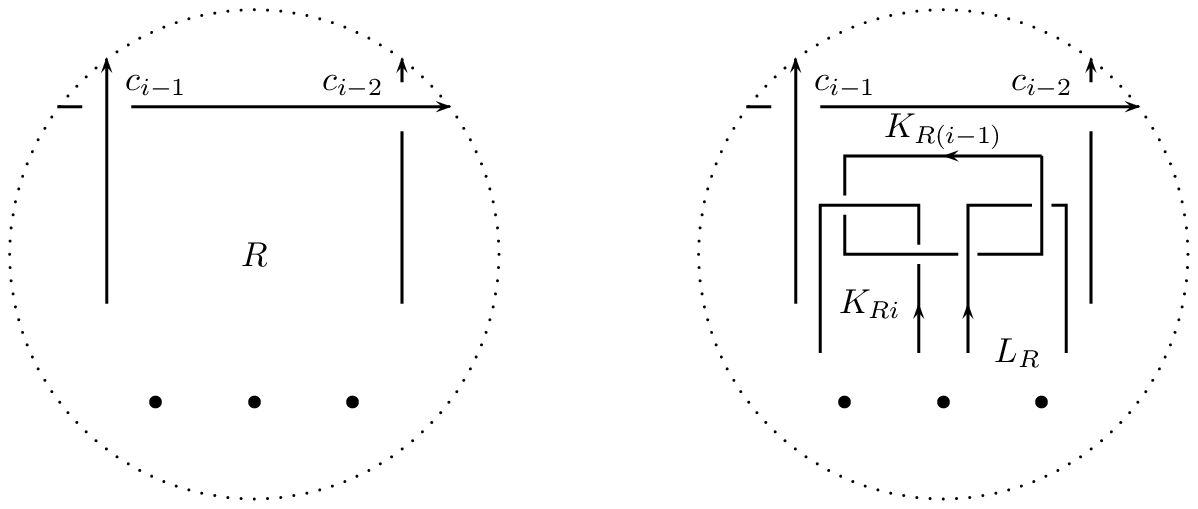} \centerline{\small Figure
5\quad}
\end{center}

Since $L_R$ is an $n$-component link, we can suppose $L_R=K_{R1}\cup K_{R2}\cup \cdots\cup K_{Rn}$ as above, then $lk(K_{R1}, K_{R2})=\pm1, \cdots, lk(K_{R(n-1)}, K_{Rn})=\pm1, lk(K_{Rn}, K_{R1})=\pm1$. Note that after taking $n$ connected sum operations between $L$ and $L_R$, we obtain $L'$. See the figure below. According to [7], it follows that Arf$(L)+$Arf$(L')=$ Arf$(L_R)$ $($mod 2$)$. Hence it suffices to find out the Arf invariant of $L_R$.
\begin{center}
\includegraphics{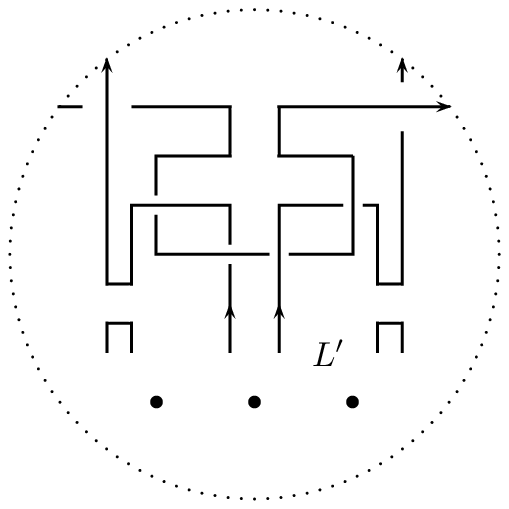} \centerline{\small Figure
6\quad}
\end{center}

As we mentioned before, it order to calculate Arf$(L_R)$, we just need to smooth $n-1$ crossing points from $K_{R1}\cap K_{R2}, \cdots, K_{R(n-1)}\cap K_{Rn}$ according to their orientations, then we will obtain a knot $K_R$ which has the same Arf invariant with $L_R$. Assign each $c_i$ with a pair of integers $(a(c_i), w(c_i))$, there are totally four cases for all $\{c_1, \cdots, c_n\}$, i.e. $(-1, +1), (+1, -1), (+1, +1), (-1, -1)$. Let $m_{-+}, m_{+-}, m_{++}$ and $m_{--}$ denote the number of the crossing points of these four types respectively. We claim that $K_R$ can be described as one of the four cases $($or their inverses$)$ below:
\begin{center}
\includegraphics{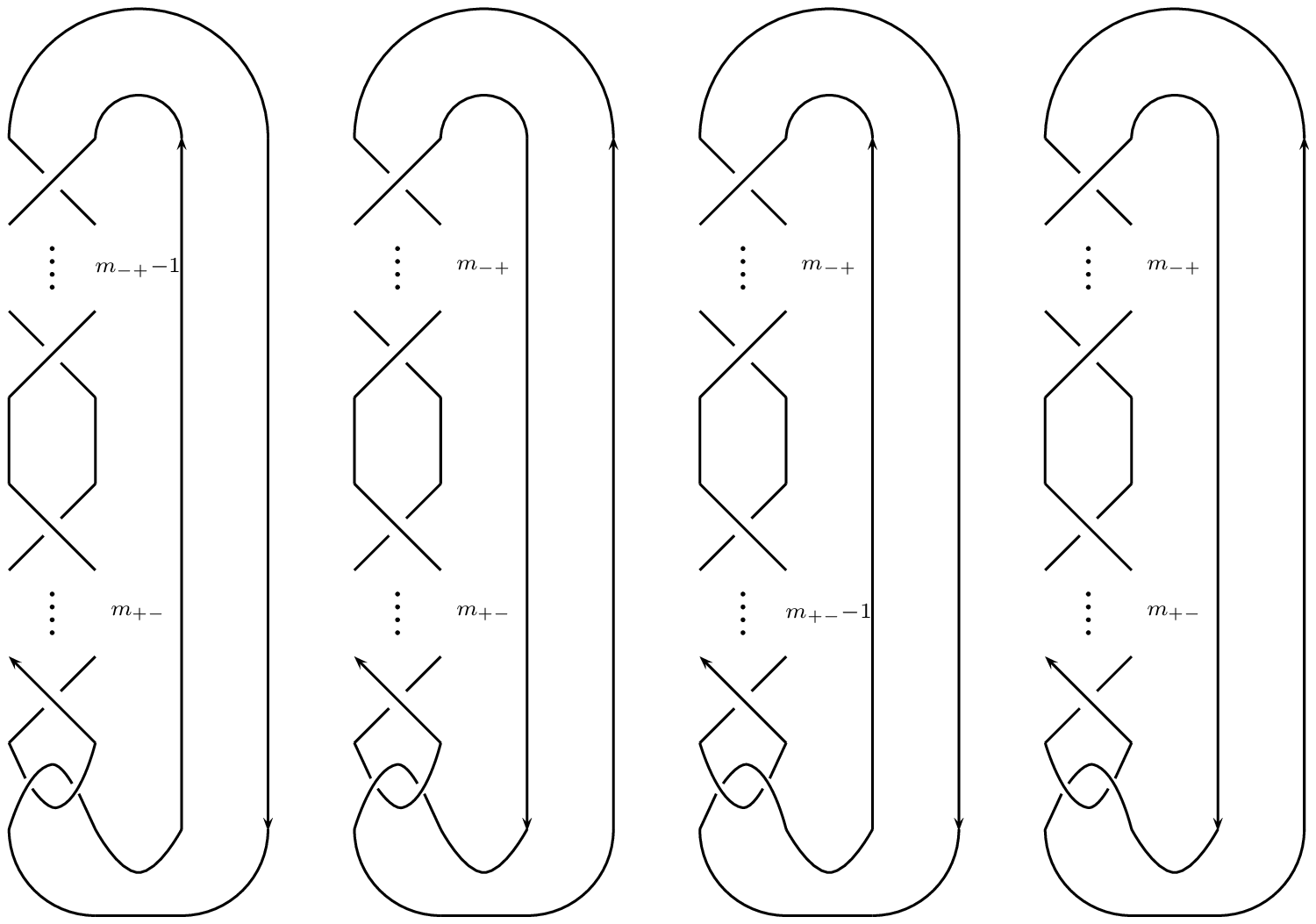} \centerline{\small Figure
7\quad}
\end{center}

In order to see this, it suffices to notice that for a crossing $c_i$ of type $(+1,+1)$ or $(-1,-1)$, smoothing one crossing point between $K_{Ri}$ and $K_{R(i+1)}$ will provide no twist. However if $c_i$ is of type $(-1,+1)$ or $(+1,-1)$, the same operation will increase one positive half-twist or one negative half-twist respectively. Since $K_R$ is a knot, from Figure 7 it is obvious that $m_{-+}+m_{+-}$ is an even integer, it follows that $A(R)=\frac{1}{2}\sum\limits_{i=1}^n(a(c_i)-w(c_i))=m_{+-}-m_{-+}$ is even. Because two full-twists preserve the Arf invariant, it follows that
\begin{center}
Arf$(K_R)=$
$\begin{cases}
0& \text{if $m_{-+}-m_{+-}=0$ $(${mod} 4$ )$;}\\
1& \text{if $m_{-+}-m_{+-}=2$ $(${mod} 4$ )$.}
\end{cases}$
\end{center}
The proof is complete.
\end{proof}

As a corollary, we have
\begin{corollary}
Let $L$ be a diagram of a proper link, $\{R_1, \cdots, R_n\}$ some regions of $L$, such that taking region crossing changes on $\{R_1, \cdots, R_n\}$ will turn $L$ to be trivial. Then
\begin{center}
\emph{Arf}$(L)=$
$\begin{cases}
0& \text{\emph{if} $\sum\limits_{i=1}^nA(R_i)=0$ $($\emph{{mod} 4}$ )$;}\\
1& \text{\emph{if} $\sum\limits_{i=1}^nA(R_i)=2$ $($\emph{{mod} 4}$ )$.}
\end{cases}$
\end{center}
\end{corollary}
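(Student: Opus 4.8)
The plan is to reduce Corollary 5.3 to an iterated application of Theorem 5.1. Since taking region crossing changes on $\{R_1,\cdots,R_n\}$ turns $L$ into a trivial link, and a trivial link has Arf invariant $0$, it suffices to track how the Arf invariant changes as we perform these region crossing changes one at a time.

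First I would order the regions arbitrarily as $R_1,\cdots,R_n$ and define a sequence of diagrams $L=L^{(0)}, L^{(1)},\cdots,L^{(n)}$, where $L^{(k)}$ is obtained from $L^{(k-1)}$ by a region crossing change on $R_k$ (viewed as a region of $L^{(k-1)}$; note that as a $4$-valent graph the underlying diagram never changes, so the regions of every $L^{(k)}$ are canonically the regions of $L$). By Proposition 4.1 each $L^{(k)}$ is again a proper link, so $\mathrm{Arf}(L^{(k)})$ is defined, and by hypothesis $L^{(n)}$ is trivial, hence $\mathrm{Arf}(L^{(n)})=0$. Applying Theorem 5.1 to the single region crossing change $L^{(k-1)}\rightsquigarrow L^{(k)}$ gives $\mathrm{Arf}(L^{(k-1)})+\mathrm{Arf}(L^{(k)})\equiv \tfrac12 A_k \pmod 2$, where $A_k$ is the half-twist count $\tfrac12\sum(a(c_i)-w(c_i))\bmod 4$ computed for the region $R_k$ in the diagram $L^{(k-1)}$. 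Telescoping the sum over $k=1,\cdots,n$ yields $\mathrm{Arf}(L)=\mathrm{Arf}(L^{(0)})+\mathrm{Arf}(L^{(n)})\equiv \sum_{k=1}^n \tfrac12 A_k\pmod 2$, which is exactly the claimed dichotomy once we identify $\sum_k \tfrac12 A_k \bmod 4$ with $\tfrac12\sum_i A(R_i)\bmod 4$ — that is, $A(R_i)$ as computed in the original diagram $L$.

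The main obstacle is precisely this last identification: Theorem 5.1 as stated computes $A(R_k)$ using the over/under data of the crossings on $\partial R_k$ \emph{in the diagram $L^{(k-1)}$}, whereas the corollary asserts the formula in terms of $A(R_i)$ \emph{in the original diagram $L$}. So I must show that $A(R_i)$, reduced mod $4$, is invariant under region crossing changes at other regions. I would argue this as follows: the quantity $a(c)-w(c)$ attached to a crossing $c$ (see Figure 4) depends on the over/under information at $c$ together with the checkerboard coloring. A crossing change at $c$ flips the sign of $a(c)-w(c)$, hence changes $A(R_i)$ by $\pm 2a(c)$, an even amount; but more importantly, for a region crossing change at $R_j$, the crossings of $\partial R_i$ that get switched are exactly those lying on $\partial R_i\cap\partial R_j$, and a parity/pairing argument (similar in spirit to the proof of Proposition 4.1, where one observes that the boundary of a region meets any given structure an even number of times) shows that these contributions cancel in pairs mod $4$. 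Concretely, if $R_i$ and $R_j$ share $2m$ crossings on their common boundary, one checks from Figure 4 that the induced change in $A(R_i)$ is a multiple of $4$. This is the one step where I expect to have to look carefully at the figure conventions rather than argue purely formally.

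Once this invariance is in hand the proof is complete: we have $\mathrm{Arf}(L)\equiv \tfrac12\sum_{i=1}^n A(R_i)\pmod 2$ with all $A(R_i)$ computed in $L$, and reading off whether $\sum_i A(R_i)\equiv 0$ or $2\pmod 4$ gives the two cases. An alternative, if the mod-$4$ invariance proves delicate, is to bypass it by noting that $\tfrac12\sum_i A(R_i)$ only needs to be well-defined \emph{mod $2$} for the statement as a dichotomy between Arf $=0$ and Arf $=1$, and mod-$2$ invariance of $A(R_i)/2$, equivalently invariance of $A(R_i)\bmod 4$ restricted to the values $\{0,2\}$, is forced: the telescoped identity already shows $\sum_i \tfrac12 A_k$ is congruent mod $2$ to the fixed quantity $\mathrm{Arf}(L)$, so any legitimate choice of representatives gives the same answer, and the content of the corollary is just that the \emph{original-diagram} representatives are legitimate, which is the parity cancellation above.
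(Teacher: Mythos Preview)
Your telescoping of Theorem 5.1 is exactly what the paper intends (it gives no proof beyond ``As a corollary, we have''), and you are right to flag the discrepancy between the $A_k$ computed in the intermediate diagrams $L^{(k-1)}$ and the $A(R_i)$ computed in the original $L$. However, your resolution fails: the claim that $R_i$ and $R_j$ share an even number $2m$ of boundary crossings is false (in the standard trefoil diagram two adjacent lobe regions share exactly one crossing), and so is the claimed mod-$4$ invariance of $A(R_i)$. Concretely, let $L$ be the trefoil and $R_1,R_2$ two lobe regions with $\partial R_1=\{c_1,c_2\}$, $\partial R_2=\{c_2,c_3\}$. Then $\{R_1,R_2\}$ is an unknotting set (net effect: switch $c_1$ and $c_3$), and by the diagram's three-fold symmetry $A(R_1)=A(R_2)$ in $L$. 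Since a single lobe already unknots, Theorem 5.1 forces $A(R_1)\equiv A(R_2)\equiv 2\pmod 4$, so $\sum_i A(R_i)\equiv 0\pmod 4$ and the formula would give $\mathrm{Arf}(L)=0$ --- but $\mathrm{Arf}(\text{trefoil})=1$. If instead one computes $A(R_2)$ in the intermediate diagram $L^{(1)}$, Theorem 5.1 gives $A(R_2)\equiv 0$ there, and the telescoped sum $2+0\equiv 2\pmod 4$ is correct.

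So the subtlety you spotted cannot be patched by a parity argument, and your fallback still hinges on that same cancellation. The corollary, read literally with all $A(R_i)$ taken in the original diagram, is not correct; it only holds with the $A(R_i)$ computed sequentially in the intermediate diagrams, in which case it is just the telescoped Theorem 5.1. Your instinct that something needed checking here was sound --- the gap is in the paper's statement, not in your telescoping.
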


\textbf{Acknowledgement} The authors wish to thank Professor Gao Hongzhu, Professor Lorenzo Traldi and Ayaka Shimizu for their useful suggestions and comments.

\no
\vskip0.2in
\no {\bf References}
\vskip0.1in

\footnotesize
\REF{[1]}Haruko Aida, {\it Unknotting operation for Polygonal type}. Tokyo J. Math. Vol. 15, No. 1, 111-121, 1992
\REF{[2]}Cheng Zhiyun, Gao Hongzhu, {\it On region crossing change and incidence matrix}. math.GT/1101.1129v2, 2011. To appear in Science China Mathematics.
\REF{[3]}Hoste, J., Nakanishi, Y. and Taniyama, K., {\it Unknotting operations involving trivial tangles}. Osaka J. Math. 27, 555-566, 1990
\REF{[4]}H. Murakami, {\it Some metrics on classical knots}. Math. Ann. 270, 35-45, 1985
\REF{[5]}H. Murakami, Y. Nakanishi, {\it On a certain move generating link-homology}. Math. Ann. 284, 75-89, 1989
\REF{[6]}Y. Nakanishi, {\it Replacements in the Conway third identity}. Tokyo J. Math. 14, 197-203, 1991
\REF{[7]}R. Robertello, {\it An invariant of knot cobordism}. Commun. Pure Appl. Math. 18, 543-555, 1965
\REF{[8]}D. Rolfsen, {\it Knots and links}. Publish or Perish, Inc. 1976
\REF{[9]}Ayaka Shimizu, {\it Region crossing change is an unknotting operation}. math.GT/1011.6304v2, 2010
\REF{[10]}Junming Xu, {\it Theory and Application of Graphs}. Kluwer Academic Publishers, 2003
\end{document}